\newcounter{mylisti} \newcounter{mylistii}
\newcounter{nest}
\newcommand{\defaultlabel}{}
\newcommand{\bn}{\ensuremath{\mathbb N}}
\newcommand{\cC}{\ensuremath{\mathcal C}}
\newcommand{\cP}{\ensuremath{\mathcal P}}
\newcommand{\cR}{\ensuremath{\mathcal R}}
\newcommand{\diam}{\operatorname{diam}}
\newcommand{\cdim}{\operatorname{cdim}}
\newcommand{\dist}{\ensuremath{\mathrm{dist}}}
\newcommand{\sep}{\ensuremath{\mathrm{sep}}}
\newcommand{\ds}{\displaystyle}
\newtheorem{theorem}{Theorem}
\newtheorem{lemma}[theorem]{Lemma}
\newtheorem{proposition}[theorem]{Proposition}
\newtheorem{corollary}[theorem]{Corollary}
\theoremstyle{definition}
\theoremstyle{remark}
\newcommand{\lip}{\ensuremath{\mathrm{Lip}}}
\newcommand{\co}{\mathrm{c}_0}
\newcommand{\xbl}{\ensuremath{\left(\sum_{n=1}^\infty\ell_{\infty}^n\right)_2}}
\begin{document}

\title[On the $(\beta)$-distortion of trees]{On the $(\beta)$-distortion of some infinite graphs}

\author{F.~Baudier}
\address{Institut de Math\'ematiques Jussieu-Paris Rive Gauche, Universit\'e Pierre et Marie Curie, Paris, France and Department of Mathematics, Texas A\&M University, College Station, TX 77843, USA}
\email{flo.baudier@imj-prg.fr}
\date{}

\thanks{}
\keywords{}
\subjclass[2010]{46B20, 46B85}

\begin{abstract} 
We show a distortion lower bound of $\Omega(\log(h)^{1/p})$ when embedding the countably branching hyperbolic tree of height $h$ into a Banach space with an equivalent norm satisfying Rolewicz property $(\beta)$ with modulus of power type $p>1$. Similarly we show that a distortion lower bound of $\Omega(l^{1/p})$ is incurred when embedding the parasol graphs with $l$ levels into a Banach space with the above property. We discuss the optimality of our results as well as several applications.
\end{abstract}

\maketitle

\section{Introduction}
Let us first introduce a bit of notation and and a few definitions. The omitted definitions of notational convention from Banach space theory can be found in \cite{Handbook}. A weighted connected graph is a connected graph $G=(V,E)$ with a weight function $w\colon E\to[0,\infty)$. We say that the graph is unweighted is every edge has unit weight. $G$ will always be equipped with its canonical metric\footnote{$\rho_G$ is actually a semi-metric but we shall always assume that the weight function induces a metric.} $\rho_G$ on its set of vertices, where $\rho_G(x,y):=\inf\{\sum_{e\in P}w(e)\ |\textrm{P is a path connecting x to y}\}$. A weighted tree $T$ is an acyclic weighted connected graph. In a tree two vertices are connected by a unique path. When we root a tree at an arbitrary vertex $r$ the ancestor-descendant relationship between pairs of vertices is then well defined. We also define the height of a vertex $x$, denoted $h(x)$, as the distance of $X$ from the root. The height of a rooted tree is $h(T):=\sup_{x\in T}h(x)$. Note that for a rooted tree $\frac{\diam(T)}{2}\le h(T)\le \diam(T)$. The last common ancestor (in the ancestor-descendant relationship) of two vertices $x$ and $y$ is denoted by $lca(x,y)$. With this notation $\rho_T(x,y)=h(x)+h(y)-2h(lca(x,y))=\rho_T(x,lca(x,y))+\rho_T(lca(x,y),y).$ For a positive integer $h$, $T^\omega_h$ denotes the unweighted complete countably branching rooted tree of height $h$, while $T^\omega_\omega$ will be the unweighted complete countably branching rooted tree of infinite height.

\medskip

For any sequence $(x_n)_{n\ge1}$ in a Banach space $X$ let $\sep[(x_n)_{n\ge1}]:=\inf\{\|x_m-x_n\|\ |\ m\neq n; n,m\ge 1\}$ and say that $(x_n)_{n\ge 1}$ is $\epsilon$-separated if $\sep[(x_n)_{n\ge1}]\ge \epsilon$. According to Kutzarova \cite{Kutzarova1991} the norm of a Banach space satisfies Rolewicz property $(\beta)$ if and only if for every $\epsilon>0$ there exists $\delta(\epsilon)>0$ such that for every $x\in B_X$ and every $\epsilon$-separated sequence $(y_n)_{n\ge 1}\in B_X$ there exists some $n_0\in\bn$ such that $\|\frac{x+y_{n_0}}{2}\|\le 1-\delta(\epsilon)$. It is convenient to introduce what is called the $(\beta)$-modulus as follows:

$$ \overline{\beta}_X(t):=1-\sup\left\{\inf_{n\ge 1}\left\{\frac{\|x+y_n\|}{2}\right\}\ |\ (y_n)_{n\ge 1}\in B_X; \sep[(y_n)_{n\ge1}]\ge t; x\in B_X \right\}.$$ 
The norm of a Banach space satisfies Rolewicz property $(\beta)$ if and only if $\overline{\beta}_X(t)>0$ for every $t>0$.
When $\overline{\beta}_X(t)\ge ct^p$ for some universal constant $c>0$ and some exponent $p\in(1,\infty)$ one says that the norm satisfies Rolewicz property $(\beta)$ with power type $p$, or equivalently that the norm has a $(\beta)$-modulus of power type $p$.

\medskip

The {\it distortion} of a bi-Lipschitz embedding $f\colon (X,d_X) \to (Y,d_Y)$ is defined as $$ \dist(f):= \lip(f)\lip(f^{-1})=\sup_{x\neq y \in X}\frac{d_Y(f(x),f(y))}{d_X(x,y)}.\sup_{x\neq y \in X}\frac{d_X(x,y)}{d_Y(f(x),f(y))}.$$
As usual $c_{Y}(X):=\inf\{\dist(f)\ |\ f\colon X\to Y \textrm{ injective}\}$ denotes the $Y$-distortion of $X$. Note that for a finite metric space $X$ its $\ell_p$-distortion coincides with its $L_p$-distortion and we simply write $c_p(X)$. 

\medskip

In this article we are concerned with the quantitative embeddability of some infinite graphs into Banach spaces satisfying Rolewicz property $(\beta)$. More precisely let $p\in(1,\infty)$, and define 
$$\cC_{(\beta_p)}:=\{\textrm{Y separable with an equivalent norm with $(\beta)$-modulus of power type $p$}\},$$
and
$$\cC_{(\beta)}:=\{\textrm{Y is separable with an equivalent norm with property $(\beta)$}\}.$$
A typical example of a Banach space in $\cC_{(\beta_p)}$ is any $\ell_p$-sum of finite dimensional Banach spaces, in particular $\ell_p$. Since property $(\beta)$ implies reflexivity neither $\ell_1$ nor $\co$ are in $\cC_{(\beta)}$. In \cite{DKLR2014} it was shown that $X$ admits an equivalent norm with property $(\beta)$ if and only if $X$ admits an equivalent norm with property $(\beta)$ with modulus of power type $p$ for some $p\in(1,\infty)$. In other words, $\bigcup_{p\in(1,\infty)}\cC_{(\beta_p)}=\cC_{(\beta)}$. We study the $(\beta)$-distortion (resp. $(\beta_p)$-distortion) of metric spaces, namely the value of the parameter $c_{(\beta)}(X):=\inf\{c_{Y}(X)\ |\ Y\in \cC_{(\beta)}\}$ (resp. $c_{(\beta_p)}(X):=\inf\{c_{Y}(X)\ |\ Y\in \cC_{(\beta_p)}\}$). The parameter $c_{(\beta)}(X)$ is a measure of the best possible embedding of $X$ into a space with property $(\beta)$. The problem of estimating the $(\beta)$-distortion becomes interesting for non-locally finite (hence infinite) metric spaces since it follows from \cite{BaudierLancien2008} that $c_{(\beta)}(M)\le c$ for every locally finite metric space $M$, $c$ being some universal constant\footnote{$c$ is less than 181 (see \cite{Baudier2012})}.  

\medskip

In Section 2.1 we will prove that if $Y\in\cC_{(\beta_p)}$ then $c_{Y}(T^\omega_h)=\Omega(\log(h)^{1/p})$, i.e. $c_{Y}(T^\omega_h)\gtrsim \log(h)^{1/p}$ where as usual the symbol $\gtrsim$ is meant to hide a constant depending eventually on the geometry of the receiving space $Y$ but not on $h$. The proof is a combination of an asymptotic version of the prong bending lemma from \cite{Kloeckner2014} (see also \cite{Matousek1999} for a similar argument), and of a self-improvement argument of Johnson and Schechtman \cite{JohnsonSchechtman2009} which was elegantly implemented in the case of binary trees by Kloeckner \cite{Kloeckner2014}. The fact that this bound is tight when $Y=\ell_p$, i.e. $c_{\ell_p}(T^\omega_h)=O(\log(h)^{1/p})$, is explained in Section 2.3. Section 2.2 is dedicated to the case of the parasol graphs introduced by Dilworth, Kutzarova and Randrianarivony in \cite{DKR2014}. Section 3 gathers some applications and remarks regarding the asymptotic Ribe program, and the finite determinacy of bi-Lipschitz embeddability problems.

\section{Embeddability into spaces with Rolewicz property $(\beta)$}

\subsection{Complete countably branching trees}
We denote by $K_{\omega,1}$ the star graph with countably many branches, i.e. the bipartite graph that has a partition into exactly $2$ classes, one consisting of a singleton called the center, the other one consisting of countably many vertices called the leaves. In the sequel $b$ will denote the center. We choose an arbitrary leaf that we will denote by $r$, and fix a labeling $(t_i)_{i\ge 1}$ of the (countably many) remaining leaves. We this labeling in mind $K_{\omega,1}$ can be seen as an umbel with countably many pedicels, where $r$ stands for root, $b$ for the branching point on the stem, and $(t_i)_{i\ge 1}$ is a labeling of the tips of the pedicels. As usual $K_{\omega,1}$ is equipped with the shortest path metric. Roughly speaking the next lemma says that if anubel is embedded into a space with property $(\beta)$ then at least one pedicel has to bend towards the root and the distance from its tip to the root is shorter than expected. It can be seen as an asymptotic analogue of Lemma $2$ in \cite{Kloeckner2014}.

\begin{lemma}[Umbel pedicel bending lemma]\label{umbel} Let $Y$ be a Banach space whose norm satisfies Rolewicz property $(\beta)$ with power type $p$ $(p>1)$, then there exists $\gamma:=\gamma(Y)>0$ such that for every non-contractive map $f\colon K_{\omega,1}\to Y$ there exists some $i_0\in\bn$ such that $$\|f(r)-f(t_{i_0})\|\le 2\left(\lip(f)-\frac{\gamma}{\lip(f)^{p-1}}\right).$$
\end{lemma}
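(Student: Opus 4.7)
The plan is to apply Rolewicz property $(\beta)$ directly to an appropriately rescaled image of the umbel. Set $L := \lip(f)$ (so $L \ge 1$ by non-contractivity) and introduce the vectors
\[
v := \frac{f(b) - f(r)}{L}, \qquad x_i := \frac{f(t_i) - f(b)}{L} \quad (i \geq 1).
\]
Since $\rho_{K_{\omega,1}}(b,r) = \rho_{K_{\omega,1}}(b,t_i) = 1$, the Lipschitz bound puts both $v$ and each $x_i$ in $B_Y$, and since $\rho_{K_{\omega,1}}(t_i,t_j) = 2$ for $i \ne j$, non-contractivity gives $\|x_i - x_j\| \ge 2/L$. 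Hence $(x_i)_{i \geq 1} \subset B_Y$ is $(2/L)$-separated.

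The crux of the argument is the identity
\[
\frac{v + x_i}{2} = \frac{f(t_i) - f(r)}{2L},
\]
which rewrites the midpoint norm controlled by property $(\beta)$ as exactly (half of) the quantity to be bounded. Applying the definition of $\overline{\beta}_Y$ to the point $v \in B_Y$ and the $(2/L)$-separated sequence $(x_i)_{i\geq 1}$ produces some $i_0 \in \bn$ with
\[
\Bignorm{\frac{v + x_{i_0}}{2}} \le 1 - \overline{\beta}_Y(2/L).
\]
Combining with the power-type estimate $\overline{\beta}_Y(t) \ge c\,t^p$ (for some $c = c(Y) > 0$) and multiplying through by $2L$ yields
\[
\|f(r) - f(t_{i_0})\| \le 2L\bigl(1 - c\,(2/L)^p\bigr) = 2\Bigl(L - \frac{c\cdot 2^p}{L^{p-1}}\Bigr),
\]
so one can take $\gamma := c \cdot 2^p$.

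The only genuinely substantive choice is the orientation: one must point $v$ from $f(r)$ toward $f(b)$ and each $x_i$ from $f(b)$ toward $f(t_i)$, so that $v$ and $x_i$ track the two consecutive edges of the geodesic $r \to b \to t_i$ and their midpoint collapses to $(f(t_i)-f(r))/(2L)$. Once this alignment is in place there is no further obstacle; the lemma is essentially the asymptotic one-shot analogue of Kloeckner's finite prong-bending lemma, with the four competing directions at a branching vertex replaced by a single ``stem direction'' $v$ and a countable $(2/L)$-separated family $(x_i)$ of ``pedicel directions'' — precisely the setting in which property $(\beta)$, rather than uniform convexity, furnishes the required flattening of the midpoint.
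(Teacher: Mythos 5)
Your proof is correct, and it takes a genuinely different (and cleaner) route than the paper's. The paper argues by contradiction: assuming every tip satisfies $\|f(t_i)-f(r)\|\ge 2(\lip(f)-\eta)$, it normalizes the stem and pedicel directions to \emph{unit} vectors $x=f(b)/\|f(b)\|$ and $p_i=(f(t_i)-f(b))/\|f(t_i)-f(b)\|$, proves via several error estimates (the auxiliary vectors $v_i$ and the intermediate ``Fact'') that all midpoints $\|(x+p_i)/2\|$ are within $2\eta/\lip(f)$ of $1$, deduces from property $(\beta)$ that $(p_i)$ has small separation, and finally contradicts non-contractivity between two tips; it then optimizes in $\eta$. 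You instead rescale by $1/\lip(f)$ rather than normalizing, which makes the midpoint identity $\frac{v+x_i}{2}=\frac{f(t_i)-f(r)}{2\lip(f)}$ \emph{exact}, and you run property $(\beta)$ in the forward direction: the separation $\|x_i-x_j\|\ge 2/\lip(f)$ comes for free from non-contractivity between the tips, and the conclusion drops out of a single application of the power-type modulus with an explicit constant. This buys a direct proof with no error terms and no final optimization. The one point to polish is that $\overline{\beta}_Y$ is defined through an infimum over $n$ that need not be attained, so to produce an actual $i_0$ satisfying a non-strict inequality you should concede a factor (e.g.\ take $\gamma=c\,2^{p-1}$ rather than $c\,2^{p}$); this is cosmetic, and the paper's own proof has the analogous issue when it passes from the modulus bound to a strict bound on $\sep[(p_i)_{i\ge 1}]$.
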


\begin{proof}
One may assume after an appropriate translation that $f(r)=0$. Fix some $\eta\in(0,\infty)$ to be chosen later and assume also that for every $i\in\bn$ one has $\|f(t_i)\|\ge2(\lip(f)-\eta)$. Then 

\begin{align*}
\|f(b)\|=\|f(b)-f(t_i)+f(t_i)\|&\ge 2(\lip(f)-\eta)-\lip(f)\\
                                             &\ge\lip(f)-2\eta,
\end{align*}
and 
\begin{align*}
\|f(b)-f(t_i)\| &\ge \|f(t_i)\|-\|f(b)\|\ge \|f(t_i)\|-\|f(b)-f(r)\|\\
                     &\ge 2(\lip(f)-\eta)-\lip(f)=\lip(f)-2\eta.
\end{align*}
Let $x=\ds\frac{f(b)}{\|f(b)\|}$ and $\ds p_i=\frac{f(t_i)-f(b)}{\|f(t_i)-f(b)\|}$. Clearly $\|x\|=\|p_i\|=1$.

\medskip

\noindent\textbf{Fact:}  \begin{align*}
\left\|\frac{x+p_i}{2}\right\|\ge 1-2\frac{\eta}{\lip(f)}.
\end{align*} 

\medskip

\noindent\textit{Proof of fact.}
Let $v_i=\|f(b)\|p_i$. Then 
\begin{align*}
\|f(b)+v_i-f(t_i)\|&=\left\|(f(t_i)-f(b))\frac{\|f(b)\|-\|f(t_i)-f(b)\|}{\|f(t_i)-f(b)\|}\right\|\\
                          &\le\big|\|f(b)\|-\|f(t_i)-f(b)\|\big|\\
                          &\le \lip(f)-(\lip(f)-2\eta)=2\eta,
\end{align*}

and 

\begin{align*}
\|f(b)+v_i\|&=\|f(b)-f(t_i)+f(t_i)+v_i\|\ge \|f(t_i)\|-\|f(b)-f(t_i)+v_i\|\\
                 &\ge 2(\lip(f)-\eta)-2\eta=2\lip(f)-4\eta.
\end{align*}
Therefore,
\begin{align*}
\left\|\frac{x+p_i}{2}\right\|&=\frac{\|f(b)+v_i\|}{2\|f(b)\|}\ge \frac{2\lip(f)-4\eta}{2\lip(f)}\\
                          &\ge 1-2\frac{\eta}{\lip(f)}.
\end{align*} 
Since the norm of $Y$ satisfies Rolewicz property $(\beta)$ with power type $p$ it implies that $\ds\sep[(p_i)_{i\ge 1}]<\left(\frac{2\eta}{c\lip(f)}\right)^{\frac{1}{p}}$ for some constant $c:=c(Y,p)\in(0,\infty)$, and hence there exist $n\neq m$ such that $\ds\|p_n-p_m\|<\left(\frac{2\eta}{c\lip(f)}\right)^{\frac{1}{p}}$. In particular $$\|v_n-v_m\|<\lip(f)\left(\frac{2\eta}{c\lip(f)}\right)^{\frac{1}{p}}.$$

But 
\begin{align*}
\|f(t_n)-f(t_m)\|&=\|f(t_n)-(f(b)+v_n)+v_n-v_m+(f(b)+v_m)-f(t_m)\|\\
                      &\le \|f(t_n)-(f(b)+v_n)\|+\|v_n-v_m\|+\|(f(b)+v_m)-f(t_m)\|\\
                      &\le 2\eta+\lip(f)\left(\frac{2\eta}{c\lip(f)}\right)^{\frac{1}{p}}+2\eta\\
                      &\le 4\eta+\lip(f)\left(\frac{2\eta}{c\lip(f)}\right)^{\frac{1}{p}}.
\end{align*}
Since $f$ is non-contracting one has $\ds2<4\eta+\lip(f)\left(\frac{2\eta}{c\lip(f)}\right)^{\frac{1}{p}}$ which is a contradiction for $\ds\eta=\frac{\gamma}{\lip(f)^{p-1}}$ for $\gamma$ small enough, indeed 
\begin{align*}
4\eta+\lip(f)\left(\frac{2\eta}{c\lip(f)}\right)^{\frac{1}{p}}&=4\frac{\gamma}{\lip(f)^{p-1}}+\lip(f)\left(\frac{2\gamma}{c\lip(f)\lip(f)^{p-1}}\right)^{\frac{1}{p}}\\
 &\le 4\gamma+\left(\frac{2\gamma}{c}\right)^{\frac{1}{p}},
\end{align*}
which can be made arbitrarily small.
\end{proof}

Recall that for a positive integer $h$, $T^\omega_h$ denotes the unweighted complete countably branching rooted tree of height $h$. We prove Theorem \ref{treedist} using a self-improvement argument \`a la Johnson and Schechtman.

\begin{theorem}\label{treedist} Let $Y$ be a Banach space admitting an equivalent norm with $(\beta)$-modulus of power type $p>1$, then $c_Y(T^\omega_h)=\Omega(\log(h)^{1/p})$.
\end{theorem}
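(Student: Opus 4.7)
The plan is a Johnson--Schechtman style self-improvement argument. Concretely, I will show that if $f \colon T^\omega_h \to Y$ is a non-contractive embedding with $\lip(f) = D$, then there is an embedding $\tilde{f} \colon T^\omega_{\lfloor h/2 \rfloor} \to Y$ with $\dist(\tilde{f}) \le D - \gamma/D^{p-1}$, where $\gamma = \gamma(Y)$ is the constant from Lemma~\ref{umbel}. Iterating this reduction roughly $\log_2 h$ times, until the residual tree is trivial, will yield the required lower bound $D = \Omega((\log h)^{1/p})$.

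To implement the reduction, observe that for each vertex $v$ of $T^\omega_h$ at odd height $2j-1$ with $j \ge 1$, the vertex $v$ together with its parent $p(v)$ (at height $2j-2$) and its countably many children (at height $2j$) forms an isometrically embedded copy of $K_{\omega,1}$ with $p(v)$ as root and $v$ as center. The restriction of $f$ to this umbel is still non-contractive with Lipschitz constant at most $D$, so Lemma~\ref{umbel} provides a child $c(v)$ with $\|f(p(v)) - f(c(v))\| \le 2(D - \gamma/D^{p-1})$, using that $L \mapsto L - \gamma/L^{p-1}$ is increasing on $[1,\infty)$. Let $S \subset T^\omega_h$ be the subtree obtained recursively from the root by declaring the $S$-children of a vertex $w$ at $T^\omega_h$-height $2j$ to be the chosen $c(v)$'s as $v$ ranges over the $T^\omega_h$-children of $w$. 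Single-valuedness of the choice function $c(\cdot)$ forces $S$ to be closed under taking last common ancestors in $T^\omega_h$ (otherwise two distinct vertices of $S$ would share the same odd-height ancestor but disagree on its chosen $c$-child), so the metric $S$ inherits from $T^\omega_h$ is exactly twice the canonical metric on $T^\omega_{\lfloor h/2 \rfloor}$ via the obvious combinatorial isomorphism $\phi \colon T^\omega_{\lfloor h/2 \rfloor} \to S$.

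Set $\tilde{f} := f \circ \phi$. Non-contractivity of $f$ together with the scaling factor $2$ gives $\lip(\tilde{f}^{-1}) \le 1/2$ immediately. For the Lipschitz constant, any two vertices $u, u' \in T^\omega_{\lfloor h/2 \rfloor}$ are connected by a vertical path through their last common ancestor consisting of $\rho_{T^\omega_{\lfloor h/2 \rfloor}}(u,u')$ parent-child edges in $T^\omega_{\lfloor h/2 \rfloor}$, and applying the triangle inequality together with the per-edge estimate from the previous paragraph yields $\|\tilde{f}(u) - \tilde{f}(u')\| \le 2(D - \gamma/D^{p-1})\,\rho_{T^\omega_{\lfloor h/2 \rfloor}}(u,u')$, hence $\dist(\tilde{f}) \le D - \gamma/D^{p-1}$. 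Writing $D_h := c_Y(T^\omega_h)$ and exploiting the monotonicity $D_{\lfloor h/2 \rfloor} \le D_h$ (coming from the obvious isometric inclusion $T^\omega_{\lfloor h/2 \rfloor} \hookrightarrow T^\omega_h$), the nonlinear inequality $D_h - D_{\lfloor h/2 \rfloor} \ge \gamma/D_h^{p-1}$ can be multiplied through by $D_h^{p-1}$ and linearized into the telescoping recursion $D_h^p \ge D_{\lfloor h/2 \rfloor}^p + \gamma$. Iterating $k$ times with $h = 2^k$ and using the isometric embedding $T^\omega_{2^{\lfloor \log_2 h \rfloor}} \hookrightarrow T^\omega_h$ for general $h$ then gives $D_h^p \gtrsim \log h$, as desired.

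The main obstacle I foresee is the propagation step: although Lemma~\ref{umbel} only shortens one edge per odd-height vertex, we need a uniform Lipschitz bound over all pairs in the subtree, and this relies on carefully arranging the subtree (via the bookkeeping ensuring $S$ is closed under last common ancestors) so that inter-vertex distances factor through the specifically bent parent-child steps. A secondary subtlety is purely algebraic: directly iterating the nonlinear recursion $D \mapsto D - \gamma/D^{p-1}$ is awkward, and the trick of multiplying by $D^{p-1}$ and invoking $D_h \ge D_{\lfloor h/2 \rfloor}$ to produce the clean additive recursion $D_h^p - D_{\lfloor h/2 \rfloor}^p \ge \gamma$ is what delivers the sharp $(\log h)^{1/p}$ exponent.
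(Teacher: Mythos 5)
Your proposal is correct and follows essentially the same route as the paper: the same umbel selection at odd levels via Lemma~\ref{umbel}, the same observation that the selected set is a $2$-rescaled isometric copy of the half-height tree (the paper leaves the closure-under-$lca$ point implicit where you spell it out), and the same self-improvement iterated about $\log_2 h$ times. The only cosmetic difference is in the final bookkeeping, where you telescope the linearized recursion $D_h^p \ge D_{\lfloor h/2\rfloor}^p + \gamma$ on the distortion numbers, whereas the paper tracks a single map and uses $\lip(f_k)\le \lip(f)-k\gamma/\lip(f)^{p-1}$; both yield the same $\Omega(\log(h)^{1/p})$ bound.
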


\begin{proof}
Assume as we may that $f\colon T^\omega_h\to Y$ is a non-contractive bi-Lipschitz embedding. Let $k\in \bn$ such that $2^k\le h<2^{k+1}$. The first two levels of the tree $T^\omega_{2^k}$ can be seen as countably many umbels attached to a root. According to Lemma \ref{umbel}, in each of these umbels one can select a tip of a pedicel from the second level of $T^\omega_{2^k}$. Countably many umbels are attached to each selected tips of level $2$. In each of these umbels select an element at the level $4$ of $T^\omega_{2^k}$ according to Lemma \ref{umbel} and repeat this procedure until countably many leaves of $T^\omega_{2^k}$ have been selected. The set of selected vertices endowed with the induced metric is clearly isometric to $T^\omega_{2^{k-1}}$ (up to a scaling factor of $2$) and it is easy to see that an appropriate rescaling of $f$ induces a non-contracting embedding $f_1$ of $T^\omega_{2^{k-1}}$ into $Y$ with Lipschitz constant at most $\ds\lip(f)-\frac{\gamma}{\lip(f)^{p-1}}$. Repeating this procedure $k$ times we get a bi-Lipschitz embedding $f_k$ of $T^\omega_1$ into $Y$ such that $1\le \lip(f_k)\le\lip(f)-k\frac{\gamma}{\lip(f)^{p-1}}$. Therefore one has that $\lip(f)\gtrsim k^{1/p}$ and it follows easily that $\dist(f)\gtrsim \log(h)^{1/p}$. 
\end{proof}

\subsection{Parasol graphs}
In this section we consider one more time the graph $K_{\omega,1}$ define in the previous section in the umbel configuration. However we introduce an extra vertex, denoted $s$, and we attach $s$ to the tips of the pedicels. $P^\omega_1$ denotes the new graph obtained, which looks like a parasol. The proof of Lemma \ref{parasol} is very similar to the proof of Lemma \ref{umbel}.

\begin{lemma}[Parasol top bending lemma]\label{parasol} Let $Y$ be a Banach space whose norm satisfies Rolewicz property $(\beta)$ with power type $p$ $(p>1)$, then there exists $\gamma:=\gamma(Y)>0$ such that for every non-contractive map $f\colon P^\omega_1\to Y$ one has $$\|f(r)-f(s)\|\le 3\left(\lip(f)-\frac{\gamma}{\lip(f)^{p-1}}\right).$$
\end{lemma}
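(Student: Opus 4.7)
The plan is to adapt the argument of Lemma~\ref{umbel} almost verbatim, using only that the vertex $s$ is attached to every tip $t_i$ at distance one (so that the chain $r-b-t_i-s$ has length $3$), and that $\rho_{P^\omega_1}(t_n,t_m)=2$ just as in $K_{\omega,1}$. After translating so that $f(r)=0$, I would argue by contradiction: fix $\eta>0$ (to be chosen of the form $\eta=\gamma/\lip(f)^{p-1}$), and suppose $\|f(s)\|>3(\lip(f)-\eta)$.

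The first step is purely elementary Lipschitz bookkeeping along the edges of $P^\omega_1$. From $\|f(t_i)-f(s)\|\le\lip(f)$ I deduce $\|f(t_i)\|\ge 2\lip(f)-3\eta$. Then from $\|f(b)\|\le\lip(f)$ (since $\rho(r,b)=1$) and $\|f(t_i)-f(b)\|\le\lip(f)$ (since $\rho(b,t_i)=1$), together with the triangle inequality, I obtain that both $\|f(b)\|$ and $\|f(t_i)-f(b)\|$ lie in $[\lip(f)-3\eta,\lip(f)]$; in particular these two quantities differ by at most $3\eta$. This is exactly the configuration that drives the umbel lemma, only with the constant $\eta$ replaced by $3\eta/2$ at the analogous place.

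The second step is to feed this into property $(\beta)$. Setting $x:=f(b)/\|f(b)\|$, $p_i:=(f(t_i)-f(b))/\|f(t_i)-f(b)\|$ and $v_i:=\|f(b)\|p_i$, the computation
$$\|f(b)+v_i-f(t_i)\|=\bigl|\|f(b)\|-\|f(t_i)-f(b)\|\bigr|\le 3\eta$$
followed by a triangle inequality yields $\|f(b)+v_i\|\ge 2\lip(f)-6\eta$, hence $\bigl\|\tfrac{x+p_i}{2}\bigr\|\ge 1-\tfrac{3\eta}{\lip(f)}$. Property $(\beta_p)$ then forces $\sep[(p_i)_{i\ge1}]<\bigl(\tfrac{3\eta}{c\lip(f)}\bigr)^{1/p}$ for some $c=c(Y,p)$, so that some pair $p_n,p_m$ satisfies $\|v_n-v_m\|<\lip(f)\bigl(\tfrac{3\eta}{c\lip(f)}\bigr)^{1/p}$.

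Finally, a triangle inequality along the path $f(t_n)\to f(b)+v_n\to f(b)+v_m\to f(t_m)$ gives
$$\|f(t_n)-f(t_m)\|\le 6\eta+\lip(f)\Bigl(\tfrac{3\eta}{c\lip(f)}\Bigr)^{1/p}.$$
Since $\rho_{P^\omega_1}(t_n,t_m)=2$ and $f$ is non-contractive, the right-hand side must exceed $2$. With the choice $\eta=\gamma/\lip(f)^{p-1}$ the right-hand side becomes $6\gamma/\lip(f)^{p-1}+(3\gamma/c)^{1/p}\le 6\gamma+(3\gamma/c)^{1/p}$, which is $<2$ once $\gamma=\gamma(Y,p)$ is chosen small enough; contradiction. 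I do not expect any real obstacle: the only structural change relative to Lemma~\ref{umbel} is that the chain from $r$ to $s$ has length $3$ rather than length $2$ to the tip, producing the factor $3$ in the statement, while the crucial fact that distinct tips remain at graph-distance~$2$ (through $b$, or equivalently through $s$) is preserved, so the $(\beta)$-based contradiction goes through unchanged.
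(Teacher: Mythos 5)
Your proposal is correct and follows essentially the same route as the paper: the paper's own proof consists exactly of deducing $\|f(t_i)\|\ge 2\lip(f)-3\eta$ from the hypothesis on $\|f(s)\|$ and then observing that this is the umbel configuration of Lemma~\ref{umbel} with $\eta$ replaced by $\tfrac{3\eta}{2}$, after which "the proof can be completed in the same fashion." You have simply written out the details the paper leaves implicit, and your constants ($3\eta$, $6\eta$, the separation bound $(3\eta/(c\lip(f)))^{1/p}$, and the final bound $6\gamma+(3\gamma/c)^{1/p}<2$) all check out.
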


\begin{proof}
One may again assume after an appropriate translation that $f(r)=0$. Fix some $\eta\in(0,\infty)$ to be chosen later and assume also that $\|f(s)\|\ge3(\lip(f)-\eta)$. Then 
\begin{align*}
\|f(t_i)\|=\|f(t_i)-f(t)+f(t)\|&\ge 3(\lip(f)-\eta)-\lip(f)\\
                                             &\ge2\lip(f)-3\eta. 
\end{align*}
The condition above on the norm of the images of the tips is similar to the one at the beginning of Lemma \ref{umbel} with $\frac{3\eta}{2}$ instead of $\eta$. The extra edges added do not change the metric structure of the part of the graph which is in the umbel configuration, and the proof can be completed in the same fashion.  
\end{proof}

We now define as in \cite{DKR2014} the parasol graph $P^{\omega}_{l}$ using a fractal-like. The parasol graph of level $1$ is nothing else but the graph $P^{\omega}_{1}$. The parasol graph of level $2$ is simply the graph obtained by replacing each edge in $P^{\omega}_{1}$ with a copy of $P^{\omega}_{1}$. Proceeding recursively we construct the parasol graph of level $l$, namely, the parasol graph $P^{\omega}_{l}$ is obtained by replacing each edge in $P^{\omega}_{l-1}$ by a copy of $P^{\omega}_{1}$. A self-improvement argument can be applied to prove Theorem \ref{parasoldist}. Indeed if we simply select the root vertex and the summit vertex in each copy of $P^{\omega}_{1}$ constituting $P^{\omega}_{l}$ one obtains a rescaled isometric copy (scaling factor of $3$) of $P^{\omega}_{l-1}$ and we iterate this process $l$ times. 
\begin{theorem}\label{parasoldist} Let $Y$ be a Banach space admitting an equivalent norm with $(\beta)$-modulus of power type $p>1$, then $c_Y(P^{\omega}_{l})=\Omega(l^{1/p})$.
\end{theorem}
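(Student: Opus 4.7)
The plan is to mimic the self-improvement argument used for $T^\omega_h$ in Theorem \ref{treedist}, replacing Lemma \ref{umbel} by Lemma \ref{parasol}. Assume, as we may, that $f\colon P^\omega_l\to Y$ is a non-contractive bi-Lipschitz embedding with $\lip(f)=\dist(f)=L$. In the fractal construction of $P^\omega_l$ from $P^\omega_{l-1}$, each edge is replaced by a copy of $P^\omega_1$ glued at its root $r$ and summit $s$; since the shortest root-to-summit path inside $P^\omega_1$ has length $3$ (via the center and any tip), the set of all root/summit vertices of these added copies --- equivalently, the image of $P^\omega_{l-1}$ inside $P^\omega_l$ --- is, with the induced metric, a rescaled isometric copy $3\cdot P^\omega_{l-1}$.

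Each added copy of $P^\omega_1$ is isometrically embedded in $P^\omega_l$, and $f$ restricted to it is non-contractive with Lipschitz constant at most $L$, so Lemma \ref{parasol} yields $\|f(r)-f(s)\|\le 3(L-\gamma/L^{p-1})$ for every such copy. Defining $f_1\colon P^\omega_{l-1}\to Y$ by $f_1(u):=f(\tilde u)/3$, where $\tilde u$ is the vertex of $P^\omega_l$ corresponding to $u$, produces a non-contractive map whose Lipschitz constant is controlled by its values on adjacent pairs (a general fact for graphs equipped with the shortest-path metric); this gives $\lip(f_1)\le L-\gamma/L^{p-1}$. Iterating $l-1$ times and using the monotonicity of $x\mapsto x-\gamma/x^{p-1}$ yields a non-contractive embedding $f_{l-1}\colon P^\omega_1\to Y$ with $\lip(f_{l-1})\le L-(l-1)\gamma/L^{p-1}$. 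One final application of Lemma \ref{parasol} to $f_{l-1}$, combined with the non-contractivity inequality $3=\rho_{P^\omega_1}(r,s)\le\|f_{l-1}(r)-f_{l-1}(s)\|$, forces $L-l\gamma/L^{p-1}\ge 1$. Hence $L^p\ge L^{p-1}+l\gamma\ge l\gamma$, so $L\gtrsim l^{1/p}$ and $\dist(f)\gtrsim l^{1/p}$.

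The main technical point is to verify that the selected vertex set truly forms an isometric copy of $3\cdot P^\omega_{l-1}$: since different copies of $P^\omega_1$ added along different edges of $P^\omega_{l-1}$ share only their root and summit endpoints, any shortest path in $P^\omega_l$ between two selected vertices decomposes into full root-to-summit traversals of individual copies, each of length $3$. The remaining bookkeeping --- preservation of non-contractivity under the $1/3$ rescaling, and the telescoped upper bound $\lip(f_i)\le L-i\gamma/L^{p-1}$ obtained by bounding $\gamma/\lip(f_{i-1})^{p-1}\ge \gamma/L^{p-1}$ at each stage --- is straightforward.
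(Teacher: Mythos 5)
Your proof is correct and follows essentially the same route as the paper: the paper's own argument is exactly the self-improvement scheme you describe --- select the root and summit of each constituent copy of $P^{\omega}_{1}$ to obtain an isometric copy of $P^{\omega}_{l-1}$ rescaled by $3$, apply Lemma \ref{parasol} to each copy, and iterate $l$ times. Your write-up merely fills in the bookkeeping (isometry of the selected subset, monotonicity of $x\mapsto x-\gamma/x^{p-1}$, the telescoped Lipschitz bound) that the paper leaves implicit.
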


 

\subsection{Optimality of the results}
In this section we will show the optimality of Theorem \ref{treedist}.

\begin{proposition}\label{lpdistortiontree} Let $p\in(1,\infty)$. Let $T=(V,E)$ be an unweighted tree, then $c_{\ell_p(E)}(T)=O(\log(\diam T)^{\frac{1}{p}})$.
\end{proposition}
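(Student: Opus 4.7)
My plan is to explicitly construct a bi-Lipschitz embedding $f\colon(V,\rho_T)\to\ell_p(E)$ of distortion $O((\log D)^{1/p})$, where $D:=\diam T$, by combining several ``scale-dependent'' auxiliary embeddings via an $\ell_p$-direct sum, in the spirit of Matou\v{s}ek's embedding of binary trees into uniformly convex spaces \cite{Matousek1999}.

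First I would fix an arbitrary root of $T$ and set $L:=\lceil\log_2 D\rceil$. Since $\ell_p(E)$ is isometric to the $\ell_p$-direct sum of $L+1$ copies of $\ell_p(E)$ via any countable bijection, it suffices to build, for each dyadic scale $k\in\{0,1,\ldots,L\}$, an auxiliary map $f_k\colon V\to\ell_p(E_k)$ satisfying
\begin{align*}
\|f_k(u)-f_k(v)\|_p\asymp\min(\rho_T(u,v),2^k)\qquad\text{for all }u,v\in V,
\end{align*}
with constants independent of $k$. In other words, each $f_k$ embeds the ``dyadic truncation at $2^k$'' $\tilde\rho_k:=\min(\rho_T,2^k)$ (which is itself a metric, as one checks from $\min(a,M)+\min(b,M)\ge\min(a+b,M)$) into $\ell_p(E_k)$ with $O(1)$ distortion.

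Granted the existence of such $f_k$'s, the $\ell_p$-direct sum $f:=\bigoplus_{k=0}^L f_k$ then satisfies $\|f(u)-f(v)\|_p^p\asymp\sum_{k=0}^L\min(\rho_T(u,v),2^k)^p$. For a pair with $\rho_T(u,v)$ comparable to $2^j$, the right-hand side is comparable to $(L-j+1)\cdot 2^{jp}$: the terms with $k\ge j$ each contribute $\Theta(2^{jp})$ while those with $k<j$ form a geometric series bounded by $\Theta(2^{jp})$. Consequently $\|f(u)-f(v)\|_p\asymp(L-j+1)^{1/p}\rho_T(u,v)$, with the ratio $\|f(u)-f(v)\|_p/\rho_T(u,v)$ lying in the interval $[\Theta(1),\Theta(L^{1/p})]$ as $j$ varies from $L$ down to $0$. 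Thus $\dist(f)=O(L^{1/p})=O((\log D)^{1/p})$, matching the lower bound of Theorem \ref{treedist} applied to $T^\omega_h\subseteq T$ (up to a constant) and so establishing the tightness claimed.

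The main obstacle is the construction of each $f_k$ with $O(1)$ distortion for the truncated metric $\tilde\rho_k$ and constants independent of $k$. Na\"ive candidates do not work: a slab-wise snowflake embedding of $T$ after cutting the tree into subtrees of diameter $\le 2^{k+1}$ gives within-slab distortion $2^{k(1-1/p)}$, which blows up in $k$; a pure cluster-indicator embedding at scale $2^k$ gives only $\|f_k\|\in\{0,\Theta(1)\}$, too coarse to be expansive at scale $2^k$. Following Matou\v{s}ek's strategy, the correct construction combines a rescaled intra-slab snowflake contribution with an inter-slab labelling coordinate, both tuned to the scale $2^k$: within each slab the snowflake is scaled by $2^{k(1-1/p)}$ to match $\rho$ at the slab diameter, while the inter-slab coordinate uses a labelling of the slab tree ensuring that crossing a slab boundary contributes an $O(1)$-Lipschitz jump of the correct order. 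The verification that the combined $f_k$ has both $\|f_k\|_p\gtrsim\tilde\rho_k$ and $\|f_k\|_p\lesssim\tilde\rho_k$ with absolute constants is a direct but intricate calculation exploiting the $\ell_p$-direct sum structure and the matched scaling of the two contributions.
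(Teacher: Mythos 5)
There is a genuine gap, and it sits at the very center of your argument: the building block you postulate cannot exist. You require, for each dyadic scale $k\in\{0,\dots,L\}$ with $L=\lceil\log_2 D\rceil$, a map $f_k$ with $\|f_k(u)-f_k(v)\|_p\asymp\min(\rho_T(u,v),2^k)$ for \emph{all} pairs, with constants independent of $k$. But for $k=L$ the truncation is vacuous ($2^L\ge D$), so $f_L$ would be an $O(1)$-distortion embedding of $(V,\rho_T)$ itself into $\ell_p$ --- precisely what Theorem~\ref{treedist} (for $T=T^\omega_h$) and Bourgain's lower bound \cite{Bourgain1986a} (for complete binary trees) rule out. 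The whole point of Proposition~\ref{lpdistortiontree} is that the distortion must grow like $(\log D)^{1/p}$; a uniformly bi-Lipschitz image of the truncated metric at the top scale would make the multi-scale gluing unnecessary and the theorem false. Your gluing arithmetic also uses the impossible part of the hypothesis: to get a $\Theta(2^{jp})$ contribution from every scale $k\ge j$ you need $f_k$ to be expanding on pairs at distance $2^j\ll 2^k$, i.e.\ exactly the regime where the truncation is inactive. A correct multi-scale scheme must demand much less of each $f_k$ --- say, $1$-Lipschitz for $\rho_T$, $\|f_k(u)-f_k(v)\|\lesssim 2^k$, and $\|f_k(u)-f_k(v)\|\gtrsim 2^k$ only when $\rho_T(u,v)\ge 2^k$ --- in which case the sum still gives an upper ratio of $O(L^{1/p})$ while the single scale $k\approx\log_2\rho_T(u,v)$ supplies the lower bound. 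Even then, the construction of such single-scale maps (via random cuts of the tree at depths congruent to a random offset mod $2^k$, or a caterpillar-type decomposition) is the entire content of the proof, and your sketch defers it to ``a direct but intricate calculation'' after acknowledging that the two natural candidates fail.

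For comparison, the paper does not glue scales at all: it writes down one explicit Bourgain-type map, $f(x)=\sum_{k=1}^{h(x)}(1+h(x)-k)^{1/q}u_{e^x_k}$, which is Matou\v{s}ek's embedding specialized to the trivial caterpillar decomposition $\cP=E$, and verifies compression $\gtrsim\rho(x,y)$ and expansion $\lesssim\rho(x,y)\log(\diam T)^{1/p}$ by splitting $\|f(x)-f(y)\|_p^p$ into the contributions below and above $lca(x,y)$. If you want to pursue the multi-scale route, restate the building block in the one-sided form above and actually construct it; as written, your key hypothesis is self-refuting.
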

 
In particular since the edge-set of $T^\omega_h$ is countable one has that $c_{\ell_p}(T^\omega_h)=O(\log(h)^{\frac{1}{p}})$.  Since the norm of $\ell_p$ has $(\beta)$-modulus of power type $p$ the optimality of Theorem \ref{treedist} follows. Another consequence is the tight estimate $c_{\ell_p}(T^\omega_h)=\Theta(\log(h)^{\frac{1}{p}})$. Proposition \ref{lpdistortiontree} is probably well-known to the experts but since we could not locate an explicit proof we will provide one below for completeness and future reference. We also take this opportunity to discuss a beautiful but delicate argument of Matou\v{s}ek in \cite{Matousek1999}. Bourgain \cite{Bourgain1986a} gave an embedding 
of the complete binary tree of height $h$ into $\ell_2$ with distortion $O(\sqrt{\log(h)})$. Its proof extends verbatim to give a distortion upper bound of $O(\sqrt{\log(\diam(T))})$ for every (not necessarily finite) unweighted tree $T$. We will provide shortly the modifications (of the embedding and of the proof) required to achieve an embedding into an $\ell_p$-space. When dealing with weighted trees the situation becomes significantly more complicated (even for finite trees). Also it is clear that an upper-bound involving the diameter is not optimal (since a infinite path is a tree with infinite diameter), but sufficient to handle our examples. These two issues were taken care of in \cite{LinialMagenSaks1998} and \cite{Matousek1999}. Part of both arguments relies (implicitly in \cite{LinialMagenSaks1998} and explicitly in \cite{Matousek1999}) on the notion of caterpillar dimension of a finite tree, denoted $\cdim(T)$. Linial, Magen, and Saks showed that the Euclidean distortion of every \textit{finite weighted tree} $T$ with $l(T)$ leaves is bounded above by   $O(\log\log(l(T))$. Matou\v{s}ek proved that $c_{\ell_p}(T)=O(\log(\cdim(T))^{\min\{\frac{1}{2},\frac{1}{p}\}})$ for $p\in(1,\infty)$. It is also mentioned in \cite{Matousek1999} that the latter result holds for infinite trees since one can define a caterpillar dimension for infinite tree as well. In regards of Theorem \ref{treedist} there can be some confusion because as we will see $\cdim(T^\omega_h)=h$ for the natural extension of the caterpillar dimension to the infinite setting. This point will be clarified by the end of this section.

\medskip

Following the neat exposition in \cite{Gupta2000}, we define the caterpillar dimension of a tree which coincides with the classical notion already defined on the class of finite trees. Let $T$ be a weighted tree. We root $T$ at an arbitrary vertex $r\in V$. Recall that a leaf in a tree is a vertex of degree $1$. To extend the notion of caterpillar decomposition to the infinite setting we need to consider more general paths than just root-leaf paths. A root-leafend path is either a path from the root to a leaf or a ray starting at the root. A monotone path is then a path which is a subset of some root-leafend path. A caterpillar decomposition of $T$ is a partition of the edge-set $E$ of $T$ consisting only of monotone paths. The width of a caterpillar decomposition $\cP$, denoted by $\textrm{width}(\cP)$, is the smallest integer $m$ such that any root-leafend path in $T$ has a non-empty intersection with at most $m$ elements of $\cP$. The caterpillar dimension of $T$ is then $\cdim(T):=\inf\{\textrm{width}(\cP)\ |\ \cP \textrm{ is a caterpillar decomposition of } $T$ \}$. Since a finite tree does not have rays, and every root-leafend path is actually a root-leaf path, the definition coincides with the definition in the finitary setting. By induction it is fairly easy to show that for every tree finite tree $T$ one has $\cdim(T)=O(\log(l(T)))$ (c.f. \cite{LinialMagenSaks1998} or \cite{Matousek1999}). This fact does not hold in the infinite setting (think about the infinite complete rooted binary tree with one extra vertex attached to the root). It is easy to see that $\cdim(T^\omega_h))=h$. Indeed, if one takes the trivial caterpillar decomposition, i.e. $\cP=E$, then every root-leafend path intersects with $h$ elements of the decomposition. 

\medskip

The following theorem was proved in \cite{Matousek1999} in the finite setting, even though it was not explicitly stated in this form, and is readily extendable to the infinite setting with the definition of the caterpillar dimension given above. One can then argue using classical arguments, which can fail for non locally finite spaces, that the upper bound $c_{\ell_p}(T)=O(\log(\cdim(T))^{\min\{\frac{1}{2},\frac{1}{p}\}})$ for finite metric spaces follows from Theorem \ref{cdim}. 
\begin{theorem}\label{cdim}[Matou\v{s}ek] For any $p\in(1,\infty)$ and for any weighted tree $T$ there exists a set $I$ and an embedding of $T$ into $\ell_p(I)$ with distortion $O(\log(\cdim(T))^{\frac{1}{p}})$. 
\end{theorem}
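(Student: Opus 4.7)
I would prove the theorem by induction on $m = \cdim(T)$, establishing the inductive claim that every weighted tree with $\cdim \le m$ admits a non-contractive embedding into some $\ell_p(I)$ with Lipschitz constant at most $D_p(m) = C_p (\log(m+1))^{1/p}$ for a constant $C_p$ depending only on $p$. First, I would fix a caterpillar decomposition $\cP$ realizing $\textrm{width}(\cP) = m$ and let $C_0 \in \cP$ be the monotone path containing the root $r$.

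\textbf{Base case and recursive construction.} When $m = 1$, the entire edge-set of $T$ is a single monotone path, so $T$ itself is a (possibly infinite) path and embeds isometrically into $\mathbb{R}$ via the running-sum-of-weights map from the root. For the inductive step, delete $C_0$ from $T$ to obtain the family of hanging subtrees $(T_j)_{j \in J}$, each rooted at a branch point $r_j \in C_0$ and of caterpillar dimension at most $m - 1$. By the inductive hypothesis, each $T_j$ carries a non-contractive embedding $\varphi_j \colon T_j \to \ell_p(I_j)$ of Lipschitz constant at most $D_p(m - 1)$, normalized so that $\varphi_j(r_j) = 0$. Define $f \colon T \to \ell_p\big(I_{C_0} \sqcup \bigsqcup_{j\in J} I_j\big)$ at a vertex $v$ by recording on the $I_{C_0}$-block the signed running-sum position along $C_0$ of the last $C_0$-vertex on the root-to-$v$ path, and, if $v \in T_j$, recording $\varphi_j(v)$ on the $I_j$-block and zero on the remaining $I_{j'}$-blocks.

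\textbf{Distortion and the logarithmic improvement.} The non-contracting property follows block-wise from that of $\varphi_j$ and of the spine embedding; the Lipschitz bound is obtained by splitting the $u$-to-$v$ geodesic into its along-$C_0$ segment and its within-$T_{j(u)}, T_{j(v)}$ segments and combining the three contributions in the $\ell_p$ norm. A direct accounting yields the step-by-one recursion $D_p(m)^p \le D_p(m-1)^p + O(1)$, which only gives $D_p(m) = O(m^{1/p})$. To improve this to $O((\log m)^{1/p})$, one applies the same construction to a balanced halving of $\cP$: cut every sufficiently long monotone path of $\cP$ at a well-chosen interior vertex to produce two sub-decompositions of width $\lceil m/2 \rceil$ each, and combine the two resulting recursive embeddings in orthogonal coordinate blocks. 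This yields the recursion $D_p(m)^p \le D_p(\lceil m/2 \rceil)^p + O(1)$, which solves to $D_p(m) = O((\log m)^{1/p})$.

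\textbf{Main obstacle.} The technical crux is the balanced-halving step: for a $u,v$ geodesic crossing from the lower sub-decomposition to the upper one through a cut vertex $w$, one has to verify that the two partial embeddings combine so that $\|f(u) - f(v)\|_p^p \gtrsim \rho_T(u,w)^p + \rho_T(w,v)^p$ and hence $\|f(u) - f(v)\|_p \gtrsim \rho_T(u,v)$. Choosing the cut vertices of each monotone path, tracking that each root-leafend path still meets at most $\lceil m/2 \rceil$ caterpillars within each half, and verifying this matching inequality across cuts is what drives the improvement from $m^{1/p}$ to $(\log m)^{1/p}$. The same recursive scheme goes through for the infinite caterpillar decompositions defined above, producing an index set $I$ (countable whenever $T$ is) and an embedding with the claimed distortion.
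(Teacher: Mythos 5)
There is a genuine gap, and it sits exactly where you placed the ``technical crux'': the combination step. Gluing the recursive embeddings \emph{in orthogonal coordinate blocks} can give, for a geodesic split at a cut vertex $w$ (or at a spine vertex $r_j$), only the lower bound $\|f(u)-f(v)\|_p^p \ge \|f(u)-f(w)\|_p^p+\|f(w)-f(v)\|_p^p \ge \rho(u,w)^p+\rho(w,v)^p \ge 2^{1-p}\rho(u,v)^p$, and the factor $2^{1-p}$ (respectively $3^{1-p}$ for the spine-plus-hanging-subtrees construction) is sharp when the legs have comparable lengths. Since your inductive hypothesis records only ``non-contractive with Lipschitz constant $D_p(m)$'', the legs $(u,w)$ and $(w,v)$ are themselves split again inside the two halves, so the loss compounds multiplicatively down the recursion: after $\Theta(\log m)$ halvings the contraction constant is $2^{(1/p-1)\log_2 m}=m^{1/p-1}$, and the resulting distortion is at least $m^{1-1/p}$, polynomially worse than the claimed $(\log m)^{1/p}$ (the step-by-one recursion degenerates even further, to an exponential loss, so it does not yield $O(m^{1/p})$ either). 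The recursions $D_p(m)^p\le D_p(m-1)^p+O(1)$ and $D_p(m)^p\le D_p(\lceil m/2\rceil)^p+O(1)$ control only the expansion; non-contractiveness is simply not preserved by orthogonal concatenation, and no choice of cut vertices repairs this.

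This is precisely the difficulty Matou\v{s}ek's explicit embedding is engineered to avoid, and it is why the paper (following him) does not argue recursively: one coordinate is allotted to each monotone path of the decomposition $\cP$, the paths are organized into at most $m=\cdim(T)$ levels, and the level-$k$ coordinate of $x$ carries the boosted coefficient $\beta_k^x=l_k^x\bigl(1+\sum_{i>k}(l_i^x/l_k^x-\tfrac{1}{2m})^+\bigr)^{1/q}$. The boost makes the coordinates of the two legs of a geodesic deliberately non-orthogonal --- lower-level coordinates ``see'' the lengths accumulated at higher levels --- so that the compression holds with an absolute constant, while on the expansion side the boost costs only a harmonic-type sum over the $\le m$ levels, which is where $\log(\cdim(T))$ comes from (compare the sum $\sum_k 1/(1+h(y)-k)$ in the proof of Proposition \ref{lpdistortiontree}). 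To salvage a recursive proof you would need a substantially stronger inductive hypothesis than a single pair of bi-Lipschitz constants --- e.g.\ scale-dependent control of $\|f(u)-f(v)\|$ or an invariant on root-distances that survives the gluing --- and making that precise essentially forces you to reconstruct the explicit weights.
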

We briefly sketch the proof of Theorem \ref{cdim}. Assume that $\cdim(T)=m$. Then $T$ admits some caterpillar decomposition $\cP$ with width less than $m$. We classify the monotone paths of the caterpillar decomposition as follows. The monotone paths whose oldest vertex (in the ancestor-descendant relationship) is the root are called the monotone paths of level $1$. The monotone paths of level $l$ are the monotone paths whose oldest vertex is a vertex of a monotone path of level $l-1$. By definition of the width of the caterpillar decomposition every monotone path of the partition belongs to exactly one level between level $1$ and level $m$. We now describe Matou\v{s}ek's embedding. Let $q$ be the conjugate exponent of $p\in(1,\infty)$, i.e. $\frac{1}{p}+\frac{1}{q}=1$. Denote by $(u_{P})_{P\in\cP}$ the canonical basis in $\ell_p(\cP)$. Let $x$ be some vertex in $T$. Denote by $P_k^x$ the monotone path of level $k$ that intersects the path from the root to $x$. There are $m(x)\le m$ such monotone paths. $l_k^x$ will denote the length of the portion of the path from the root to $x$ that belongs to the path $P_k^x$. For a real number $\alpha$ we will use the convenient notation $\alpha^+:=\max\{0;\alpha\}$. Finally the embedding $f$ from $T$ into $\ell_p(\cP)$
is given by $f(x)=\sum_{k=1}^{m(x)}\beta_k^x u_{P_k^x}$ where $\beta_{m(x)}^x:=l_{m(x)}^x$ and for $1\le k\le m(x)-1$, 

$$\beta_k^x:=l_k^x\left(1+\sum_{i=k+1}^{m(x)}\left(\frac{l_i^x}{l_k^x}-\frac{1}{2m}\right)^+\right)^{\frac{1}{q}}.$$

When $l_k^x=0$ we take $\beta_k^x=0$. It can then be checked that the tedious analysis of the distortion of the embedding can be carried out verbatim in the infinite setting. Instead of repeating the proof word by word we will prove Proposition \ref{lpdistortiontree} applying Matou\v{s}ek's approach in this particular case and show that it essentially boils down to an analogue of Bourgain's embedding when one considers the trivial caterpillar decomposition. The proof retains most of the arguments from the full proof of Theorem \ref{cdim} and we will discuss the missing arguments needed to reach the conclusion of Theorem \ref{cdim}.

\medskip

\noindent\textit{Proof of Proposition \ref{lpdistortiontree}.} Let $T$ be an unweighted rooted tree with finite height $h$ and consider the trivial caterpillar decomposition $\cP=E$. $\cP$ has width at most $h$. In this case one can take $m=h$ and $m(x)=h(x)$ the height of $x$. The paths of level $l$ are exactly the edges with one vertex at height $l-1$ and the other one at height $l$. Since $w(e)=1$ for every $e\in E$ one has $l_k^x=1$ for every $x\in T$ and every $k\in\{1,\cdots, h(x)\}$. The coefficients in Matou\v{s}ek's embedding become $\beta_{h(x)}^x=1$ and $\beta_k^x=\left(1+(h(x)-k)\frac{2h-1}{2h}\right)^{\frac{1}{q}}$ for $k\in\{1,\cdots, h(x)-1\}$. For $p=q=2$ this embedding is nothing else but Bourgain's embedding up to the coefficient $\frac{2h-1}{2h}$. We could keep this coefficient in the rest of the proof but we will end up realizing that the distortion obtained is the same up to some constant. We will just drop this coefficient which is a reminiscence of Matou\v{s}ek's embedding and that is not needed for the distortion we are aiming at. Therefore our embedding of $T$ into $\ell_p(E)$ is 

$$f(x)=\sum_{k=1}^{h(x)}\left(1+(h(x)-k)\right)^{\frac{1}{q}} u_{e^x_k},$$

where $e^x_k$ is the edge of level $k$ in the unique path from the root to $x$.
We now proceed with the analysis of the embedding. Let $x,y\in T$.
\begin{align*}
\|f(x)-f(y)\|_p^p&=\|\sum_{k=1}^{h(x)}\left(1+(h(x)-k)\right)^{\frac{1}{q}} u_{e^x_k}-\sum_{k=1}^{h(y)}\left(1+(h(y)-k)\right)^{\frac{1}{q}} u_{e^y_k}\|_p^p\\
                  &=\sum_{k=1}^{h(lca(x,y))}|\left(1+(h(x)-k)\right)^{\frac{1}{q}}-\left(1+(h(y)-k)\right)^{\frac{1}{q}}|^{p}\hskip 5mm (C)\\
                  &+\sum_{k=h(lca(x,y))+1}^{h(x)}\left(1+(h(x)-k)\right)^{p-1}\hskip 5mm (A)\\
                  &+\sum_{k=h(lca(x,y))+1}^{h(y)}\left(1+(h(y)-k)\right)^{p-1}\hskip 5mm (B)\\
\end{align*}
The quantities $A$ and $B$ are easy to estimate and we record it in the following fact.
\noindent\textbf{Fact 1:}  The following inequalities hold:

$$\left(\frac{h(x)-h(lca(x,y))}{2}\right)^{p}\le A\le \left(h(x)-h(lca(x,y))\right)^{p},$$
and
$$\left(\frac{h(y)-h(lca(x,y))}{2}\right)^{p}\le B\le \left(h(y)-h(lca(x,y))\right)^{p}.$$

\noindent\textit{Proof of Fact 1.}
\begin{align*}
A\le & \sum_{k=h(lca(x,y))+1}^{h(x)}\left(h(x)-h(lca(x,y))\right)^{p-1}\le \left(h(x)-h(lca(x,y))\right)^{p}.
\end{align*}

Denote by $j$ the unique integer in $[h(lca(x,y))+1,h(x)]$ such that $1+h(x)-k\ge \frac{h(x)-h(lca(x,y))}{2}$ for $k\le j$ and
$1+h(x)-(j+1)<\frac{h(x)-h(lca(x,y))}{2}$. 
\begin{align*}
A\ge & \sum_{k=h(lca(x,y))+1}^{j}\left(1+h(x)-k\right)^{p-1}\ge \sum_{k=h(lca(x,y))+1}^{j}\left(\frac{h(x)-h(lca(x,y))}{2}\right)^{p-1}\\
  \ge & (j-h(lca(x,y)))\left(\frac{h(x)-h(lca(x,y))}{2}\right)^{p-1}\ge \left(\frac{h(x)-h(lca(x,y))}{2}\right)^{p}
\end{align*}
By exchanging the role of $x$ and $y$ we get the same estimates for $B$.

\medskip

\noindent\textit{Compression of the embedding:}
\begin{align*}
\|f(x)-f(y)\|_p&\ge (A+B)^{\frac{1}{p}}\\
                  &\ge \left(\left(\frac{h(x)-h(lca(x,y))}{2}\right)^{p}+\left(\frac{h(y)-h(lca(x,y))}{2}\right)^{p}\right)^{\frac{1}{p}}\\
                  & \ge 2^{-\frac{1}{q}}\frac{h(x)-h(lca(x,y))}{2}+\frac{h(y)-h(lca(x,y))}{2}\\
                  &\ge 2^{\frac{1}{p}-2}\rho(x,y).
\end{align*}

\noindent\textit{Expansion of the embedding:}
To prove Fact 2, we will need the inequality which says that for every $s\in[0,1]$ and for every $a>b>0$ then $a^s-b^s\le\frac{a-b}{a^{1-s}}$. 

\medskip

\noindent\textbf{Fact 2:} For all $x,y\in T$, $C\le 2\rho(x,y)^p\log(\diam(T)).$

\medskip

\noindent\textit{Proof of Fact 2.} Since $T$ is a hyperbolic tree it is sufficient to prove Fact 2 in the situation were $lca(x,y)=x$. 
\begin{align*}
C&=\sum_{k=1}^{h(lca(x,y))}|\left(1+(h(x)-k)\right)^{\frac{1}{q}}-\left(1+(h(y)-k)\right)^{\frac{1}{q}}|^{p}\\
  &\le \sum_{k=1}^{h(lca(x,y))}\left(\frac{h(y)-h(x)}{(1+h(y)-k)^{\frac{1}{p}}}\right)^{p}=\sum_{k=1}^{h(x)}\frac{\rho(x,y)^p}{(1+h(y)-k)}\\
  &\le \rho(x,y)^p\sum_{k=2}^{h(y)}\frac{1}{k}\le 2\rho(x,y)^p\log(h(y))
\end{align*}
Combining all the estimates and rescaling by $2^{1/p}$ we obtain an embedding (still denoted $f$) from $T$ into $\ell_p(E)$ such that for every $x,y\in T$,
$$\frac{\rho(x,y)}{4}\le\|f(x)-f(y)\|_p\le 2^{\frac{1}{p}}\rho(x,y)\log(\diam(T))^{\frac{1}{p}}.$$
\hskip 12cm$\square$

\medskip

Considering weighted trees introduce some supplementary difficulties. For instance we have to be a little bit more careful at the branching point but this technicality is easily overcome and is not fundamental. However if you consider a ray attached to the root of the tree $T^\omega_h$ for instance, the new tree has infinite diameter and the choice of the trivial caterpillar decomposition is useless. So you have to include the ray as one of the monotone paths of a caterpillar decomposition of $T$. Doing this $\left(\frac{l_i^x}{l_k^x}-\frac{1}{2m}\right)^+$ which is never $0$ in our proof might eventually vanish. Sinc the $\log(\diam(T))$ term comes from a crude estimate in the last sum above, the fact that $\left(\frac{l_i^x}{l_k^x}-\frac{1}{2m}\right)^+$ might be $0$ for weighted trees with caterpillar decomposition less than $m$ reduces the contribution of the terms of the sum. That is the crucial point. 
\section{Applications}

\subsection{Asymptotic Ribe Program}
Recall briefly the asymptotic versions of uniform convexity and uniform
smoothness. Let $(X,\|\ \|)$ be a Banach space and $\tau>0$. We denote by $B_X$ its closed unit ball and by $S_X$ its unit sphere. For $x\in S_X$ and $Y$ a closed linear subspace of $X$, we define $$\overline{\rho}(\tau,x,Y)=\sup_{y\in S_Y}\|x+\tau y\|-1\ \ \ \ {\rm and}\ \ \ \ \overline{\delta}(\tau,x,Y)=\inf_{y\in S_Y}\|x+\tau y\|-1.$$ Then
$$\overline{\rho}(\tau)=\sup_{x\in S_X}\ \inf_{{\rm dim}(X/Y)<\infty}\overline{\rho}(\tau,x,Y)\ \ \ \ {\rm and}\ \ \ \ \overline{\delta}(\tau)=\inf_{x\in S_X}\ \sup_{{\rm dim}(X/Y)<\infty}\overline{\delta}(\tau,x,Y).$$ The norm $\|\ \|$ is said to be {\it asymptotically uniformly smooth} (a.u.s. in short) if $$\lim_{\tau \to 0}\frac{\overline{\rho}(\tau)}{\tau}=0.$$ It is said to be {\it asymptotically uniformly convex} (a.u.c. in short) if $$\forall \tau>0\ \ \ \ \overline{\delta}(\tau)>0.$$ These moduli have been first introduced by Milman in \cite{Milman1971}.
$\cR$ will denote the class of reflexive Banach spaces and we define
$$\mathcal{AUC}:=\{Y\ |\ Y\textrm{ is separable and has an equivalent a.u.c. norm}\}$$ and
$$\mathcal{AUS}:=\{Y\ |\ Y\textrm{ is separable and has an equivalent a.u.s. norm}\}.$$ 

We recall the main result from \cite{BKL2010}.

\begin{theorem}[Baudier-Kalton-Lancien]\label{BKL} Let $X$ be a reflexive Banach space. The following assertions are equivalent:
\begin{enumerate}
\item $X$ is not a.u.s. renormable or $X$ is not a.u.c. renormable,
\item $\sup_{h\ge 1}c_X(T^\omega_h)<\infty$,
\item $c_X(T^\omega_\omega)<\infty$.
\end{enumerate}
\end{theorem}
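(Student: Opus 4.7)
The plan is to establish the cycle of implications $(3)\Rightarrow(2)\Rightarrow(1)\Rightarrow(3)$. The first implication is immediate since $T^\omega_h$ embeds isometrically into $T^\omega_\omega$ for every $h\ge 1$, giving $c_X(T^\omega_h)\le c_X(T^\omega_\omega)$ uniformly in $h$.

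For the contrapositive of $(2)\Rightarrow(1)$, I would assume that $X$ is reflexive and admits \emph{both} an asymptotically uniformly smooth and an asymptotically uniformly convex equivalent norm. Asymptotic renorming theorems (in the spirit of Knaust--Odell--Schlumprecht, and sharpened by Dilworth--Kutzarova--Lancien--Randrianarivony) allow upgrading both moduli to power type on a single equivalent norm, say a.u.s.\ of type $q$ and a.u.c.\ of type $p$. The coexistence of two power-type moduli should then yield an asymptotic analogue of the prong-bending lemma very much like Lemma \ref{umbel}: for every non-contractive $f\colon K_{\omega,1}\to X$ there is a leaf $t_{i_0}$ with $\|f(r)-f(t_{i_0})\|\le 2(\lip(f)-\gamma/\lip(f)^{p-1})$ for some $\gamma=\gamma(X)>0$. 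Feeding this into the Johnson--Schechtman self-improvement procedure used in the proof of Theorem \ref{treedist} would produce $c_X(T^\omega_h)=\Omega((\log h)^{1/p})$, contradicting $(2)$.

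For $(1)\Rightarrow(3)$ I would proceed constructively. Assume without loss of generality that $X$ is not a.u.s.\ renormable; the a.u.c.\ case reduces to this one via the $X\leftrightarrow X^*$ duality available because $X$ is reflexive. Failure of a.u.s.\ renormability supplies, for every equivalent norm on $X$, weakly null $\epsilon$-separated sequences along which the norm fails to decrease in the required asymptotic sense. One then builds a bi-Lipschitz embedding $f\colon T^\omega_\omega\to X$ by recursion on the height: set $f(\mathrm{root})=0$, and if $v$ has already been placed at $f(v)$, send the countably many children of $v$ to $f(v)+\lambda_{h(v)}x_n^{(v)}$, where $(x_n^{(v)})$ is a suitable weakly null $\epsilon$-separated sequence and $\lambda_{h(v)}$ is a scale factor chosen so that telescoping along a root-to-leafend path stays close to the tree metric. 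Reflexivity is essential both to extract the weakly null sequences at each step and, more fundamentally, to realize the asymptotic structure of $X$ inside $X$ itself rather than only in $X^{**}$ or in an ultrapower.

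The hard part will be $(1)\Rightarrow(3)$: the scales $\lambda_{h(v)}$ and the separation parameter $\epsilon$ must be calibrated consistently across all heights so that both the upper and lower Lipschitz bounds survive the infinite recursion. The other two implications are comparatively routine once the correct asymptotic renorming theorems and the prong-bending mechanism already at the heart of the present paper are in hand.
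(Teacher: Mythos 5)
First, a point of comparison: the paper does not actually prove Theorem \ref{BKL} --- it is imported from \cite{BKL2010} as a black box, and the only implication the paper revisits is the non-embedding direction (a.u.s.\ and a.u.c.\ renormable $\Rightarrow$ $\sup_h c_X(T^\omega_h)=\infty$), which it reproves quantitatively via property $(\beta)$. Your $(3)\Rightarrow(2)$ is fine, and your contrapositive of $(2)\Rightarrow(1)$ is essentially that same route; it would be cleaner to invoke Theorem \ref{equivalences} to pass from ``reflexive, a.u.s.\ and a.u.c.\ renormable'' to ``equivalent norm with $(\beta)$-modulus of power type'' and then quote Theorem \ref{treedist}, rather than asserting (without argument) that the coexistence of two power-type asymptotic moduli directly yields an analogue of Lemma \ref{umbel}; one should also dispose of the separability hypothesis in Theorem \ref{equivalences}, e.g.\ by noting that both renorming properties pass to (separable) subspaces.

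The genuine gap is in $(1)\Rightarrow(3)$. Your reduction of the ``not a.u.c.\ renormable'' case to the ``not a.u.s.\ renormable'' case by duality does not work: for reflexive $X$, failure of a.u.c.\ renormability of $X$ corresponds to failure of a.u.s.\ renormability of $X^*$, so your construction would produce an embedding of $T^\omega_\omega$ into $X^*$, not into $X$. There is no a priori transfer of bi-Lipschitz embeddability of a metric space from $X^*$ to $X$; the statement that $c_X(T^\omega_\omega)<\infty$ iff $c_{X^*}(T^\omega_\omega)<\infty$ is itself a \emph{consequence} of Theorem \ref{BKL}, so invoking it here is circular. In \cite{BKL2010} the two cases are handled by genuinely different constructions. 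Moreover, even in the a.u.s.\ case your sketch omits the essential ingredient: with the map $v\mapsto\sum_{t\preceq v}\lambda\, x_t$ the \emph{upper} Lipschitz bound is free from the triangle inequality, and the entire difficulty is the \emph{lower} bound for pairs $u,v$ branching at a common ancestor $w$, which requires tree-indexed weakly null families satisfying uniform $\ell_1^+$-type lower estimates along and across branches --- this is where reflexivity and the quantitative failure of the asymptotic modulus (via the Szlenk index) really enter. ``Weakly null and $\epsilon$-separated'' is far from sufficient: such sequences exist in $\ell_2$, where the trees do not embed uniformly. As written, $(1)\Rightarrow(3)$ is not established.
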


Theorem \ref{equivalences} which was shown in \cite{DKLR2014}, says that the class $\cC_{(\beta)}$ and the class $\cR\cap\mathcal{AUC}\cap\mathcal{AUS}$ coincide. 
\begin{theorem}\label{equivalences} Let X be a separable reflexive Banach space. The following assertions are equivalent:
\begin{enumerate}
\item $X$ admits an equivalent norm with property $(\beta)$,
\item $X$ is a.u.s. renormable and a.u.c. renormable.
\end{enumerate}
\end{theorem}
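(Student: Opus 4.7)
I would separate the two implications, as they are of different character.

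For (1) $\Rightarrow$ (2), start from an equivalent norm on $X$ with property $(\beta)$. The a.u.c. side is essentially built-in to the definition: an $\varepsilon$-separated sequence in $B_X$ together with any $x \in B_X$ admits, by $(\beta)$, an index whose midpoint with $x$ has norm at most $1 - \overline{\beta}_X(\varepsilon)$. Specializing to separated sequences extracted from a weakly null sequence of norm at least $\tau$ (available by a standard extraction argument in the reflexive setting) yields a quantitative estimate $\overline{\delta}_X(\tau) \gtrsim \overline{\beta}_X(\tau)$, so $X$ is a.u.c. For a.u.s., one proceeds analogously: $(\beta)$, combined with a duality argument, or equivalently with the fact that property $(\beta)$ implies nearly uniform smoothness in the sense of Huff, gives a control on $\overline{\rho}_X(\tau)$. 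Conceptually, $(\beta)$ sits between NUC and NUS, and for reflexive spaces these correspond to a.u.c. and a.u.s. renormability; since $X$ is reflexive by hypothesis, both renormings exist.

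For (2) $\Rightarrow$ (1), the argument is constructive. First, upgrade the hypothesis by invoking the Knaust-Odell-Schlumprecht theorem (and its a.u.c. counterpart) to obtain equivalent norms $|\cdot|_s$ with a.u.s. modulus of power type $q > 1$ and $|\cdot|_c$ with a.u.c. modulus of power type $p > 1$. Then form a single equivalent norm via an $\ell_r$-type combination, such as $|x|^r := |x|_s^r + |x|_c^r$, with $r$ to be calibrated. To verify property $(\beta)$ for the combined norm: take $x \in B_X$ and an $\varepsilon$-separated sequence $(y_n) \subset B_X$ under $|\cdot|$; by reflexivity extract a subsequence with $y_n \rightharpoonup y$, so that $(y_n - y)$ is weakly null with $|y_n - y|$ eventually bounded below by a constant multiple of $\varepsilon$. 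The a.u.s. power-type estimate in $|\cdot|_s$ controls $|x + y_n|_s$ in terms of $|x + y|_s$ up to a small smoothness error, and the a.u.c. power-type estimate in $|\cdot|_c$ forces a quantitative decrease in $|x + y_n|_c$. Recombining via the $\ell_r$-sum and choosing $r$ so that the convexity gain dominates the smoothness loss yields a single index $n_0$ with $|x + y_{n_0}|/2 \le 1 - \delta(\varepsilon)$ for a computable $\delta(\varepsilon) > 0$.

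The main obstacle is this verification in the second implication. Property $(\beta)$ demands a single index good for the midpoint condition against a fixed $x$, whereas a.u.c. and a.u.s. are separately asymptotic statements with their own test subsequences and test vectors. The delicate point is arranging that the convexity gain from $|\cdot|_c$ strictly dominates the smoothness loss from $|\cdot|_s$ in the combined norm, which requires a careful calibration of the combination exponent $r$ against the power types $p$ and $q$. Without the power-type upgrades, no such calibration is possible, which is why the Knaust-Odell-Schlumprecht step is essential.
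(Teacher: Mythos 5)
First, a point of reference: the paper does not actually prove this theorem --- it is quoted from \cite{DKLR2014}, with the reformulation in terms of a.u.c.\ and a.u.s.\ renormability extracted from the proof of Theorem 4 in \cite{Kutzarova1990}. So you are not reconstructing an argument that appears in the text but rather the argument of Dilworth--Kutzarova--Lancien--Randrianarivony. Your outline of $(1)\Rightarrow(2)$ (a $(\beta)$-norm is itself a.u.c.; the a.u.s.\ side via duality/NUS) is consistent with that literature and is the easy half.

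The gap is in your verification that the combined norm has property $(\beta)$, and it sits exactly at the point you flag as ``the main obstacle''. After extracting $y_n\rightharpoonup y$ and setting $z_n=y_n-y$, the perturbation $z_n$ is \emph{macroscopic}: $\|z_n\|$ can be as large as $2$, comparable to or larger than $\|x+y\|$. The a.u.s.\ estimate $\limsup_n\|u+z_n\|\le \|u\|\bigl(1+\overline{\rho}(\limsup_n\|z_n\|/\|u\|)\bigr)$ improves on the triangle inequality only when $\|z_n\|\ll\|u\|$; since $\overline{\rho}(t)\le t$ always and the power-type bound $\overline{\rho}(t)\le Ct^q$ is useful only for small $t$, in the regime $t\gtrsim 1$ the estimate degenerates to $\limsup_n\|u+z_n\|\le\|u\|+\limsup_n\|z_n\|$, i.e.\ to nothing. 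So ``up to a small smoothness error'' is false where it matters, and no calibration of the combination exponent $r$ against $p$ and $q$ can make a convexity gain of size $\delta(\varepsilon)$ dominate a smoothness loss of size comparable to $1$. Moreover, the a.u.c.\ modulus gives \emph{lower} bounds on $\|u+z_n\|$, so it cannot ``force a quantitative decrease in $|x+y_n|_c$''; its correct role is the UKK-type conclusion that the weak limit satisfies $|y|_c\le 1-\delta_1$ (apply a.u.c.\ at $y$ to the weakly null perturbations $z_n$ and use $|y_n|_c\le 1$), whence $|x+y|_c\le 2-\delta_1$. That bounds the norm of the weak limit of the midpoints, whereas property $(\beta)$ demands a bound on $\|x+y_{n_0}\|$ for an actual index $n_0$, and the passage from the weak limit back to a term of the sequence is precisely the step your proposal does not supply. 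This is where the genuine content of the renorming theorem lies; the honest course here is to cite \cite{DKLR2014} together with Theorem 4 of \cite{Kutzarova1990}, as the paper does, or to reproduce their argument rather than the heuristic above.
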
 

The same equivalences also hold without the separability assumption \cite{DKLRpriv}. Note that the equivalence with assertion $(2)$ is not explicit in \cite{DKLR2014} but follows from the proof of Theorem 4 in \cite{Kutzarova1990}. 

\begin{corollary}Let $X$ be a reflexive Banach space. The following assertions are equivalent:
\begin{enumerate}
\item $X$ admits an equivalent norm with property $(\beta)$,
\item $X$ is a.u.s. renormable and a.u.c. renormable,
\item $\sup_{h\ge 1}c_X(T^\omega_h)=\infty$,
\item $c_X(T^\omega_\omega)=\infty$.
\end{enumerate}
\end{corollary}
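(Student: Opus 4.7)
The corollary is really a bookkeeping step: everything needed has been stated in the two preceding theorems, so the plan is simply to assemble them. I would set up the cycle $(1) \Leftrightarrow (2) \Leftrightarrow (3) \Leftrightarrow (4)$ and dispatch each link by citation.

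First, $(1) \Leftrightarrow (2)$ is precisely Theorem \ref{equivalences}, together with the authors' comment that the separability hypothesis there can be removed \cite{DKLRpriv}; under the standing reflexivity assumption this is immediate.

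Next, for $(2) \Leftrightarrow (3) \Leftrightarrow (4)$ I would invoke Theorem \ref{BKL} in its contrapositive form. Reading its three equivalent conditions backwards, for a reflexive $X$ the statements ``$X$ is both a.u.s.\ and a.u.c.\ renormable'', ``$\sup_{h\ge 1} c_X(T^\omega_h) = \infty$'' and ``$c_X(T^\omega_\omega) = \infty$'' are pairwise equivalent, which matches assertions $(2)$, $(3)$, $(4)$ of the corollary and closes the cycle. No real obstacle arises here, as the deep content is already packaged inside Theorems \ref{BKL} and \ref{equivalences}; the work is purely organizational.

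As a bonus observation, the implication $(1) \Rightarrow (3)$ now admits a completely self-contained quantitative proof within this paper: by \cite{DKLR2014}, property $(\beta)$ upgrades to property $(\beta_p)$ for some $p\in(1,\infty)$, whence Theorem \ref{treedist} gives $c_X(T^\omega_h) \gtrsim \log(h)^{1/p} \to \infty$ as $h\to\infty$. The only direction of the corollary that cannot be made quantitative by the methods developed here is $(3) \Rightarrow (4)$, which still relies on the ultraproduct/weak-compactness machinery behind Theorem \ref{BKL} to pass from bounded distortions at every finite level to a bi-Lipschitz embedding of the infinite tree $T^\omega_\omega$.
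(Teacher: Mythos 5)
Your proof is correct and matches the paper's (implicit) argument exactly: the corollary is stated without proof precisely because it is the conjunction of Theorem \ref{equivalences} (giving $(1)\Leftrightarrow(2)$, with separability removed via \cite{DKLRpriv}) and the termwise negation of Theorem \ref{BKL} (giving $(2)\Leftrightarrow(3)\Leftrightarrow(4)$). One small slip in your closing aside: the direction that relies on the compactness/ultraproduct machinery of Theorem \ref{BKL} is $(4)\Rightarrow(3)$, not $(3)\Rightarrow(4)$ --- the latter is immediate because each $T^\omega_h$ embeds isometrically into $T^\omega_\omega$, whence $\sup_{h\ge 1}c_X(T^\omega_h)\le c_X(T^\omega_\omega)$.
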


Using \cite{BKL2010} as a black box to prove that for every Banach space $Y$ admitting an equivalent norm with property one has $\lim_{h\to\infty}c_Y(T_h)=\infty$, does not give a good estimate on the rate of growth of $(c_Y(T_h))_{h\ge 1}$. Moreover the proof of the implication in \cite{BKL2010} stating that $\lim_{h\to\infty}c_Y(T_h)=\infty$ for every Banach space $Y$ that is a.u.s. renormable and a.u.c. renormable, is rather technical and escape the geometric intuition. Theorem \ref{treedist} gives a simple geometric direct proof of $(1)$ implies $(3)$ and provides an optimal estimate on the rate of growth. Moreover it can be applied to graphs with a completely different geometry.

\begin{corollary}Let $Y\in\cR\cap\mathcal{AUC}\cap\mathcal{AUS}$, then there exists $p\in(1,\infty)$ such that 
$$c_Y(P^{\omega}_{l})=\Omega(l^{1/p}).$$
In particular $c_Y(P^{\omega}_{\omega})=\infty.$
\end{corollary}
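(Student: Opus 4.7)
The plan is to chain together three results already established in the paper. First, starting from the hypothesis $Y \in \cR \cap \mathcal{AUC} \cap \mathcal{AUS}$, I would invoke Theorem \ref{equivalences} to conclude that $Y$ admits an equivalent norm with property $(\beta)$, i.e.\ $Y \in \cC_{(\beta)}$. Next, I would appeal to the renorming result of \cite{DKLR2014} stated in the introduction, which says that $\bigcup_{p \in (1,\infty)} \cC_{(\beta_p)} = \cC_{(\beta)}$; this yields some exponent $p \in (1,\infty)$ such that $Y$ admits an equivalent norm whose $(\beta)$-modulus is of power type $p$. Note that distortion is invariant under passing to an equivalent norm, so for the purpose of computing $c_Y(P^\omega_l)$ we may work directly with this renorming.

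Once $p$ is fixed, the quantitative estimate $c_Y(P^\omega_l) = \Omega(l^{1/p})$ is an immediate application of Theorem \ref{parasoldist} to $Y$ equipped with the renormed $(\beta_p)$-norm. This produces the first assertion of the corollary.

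For the \textit{in particular} statement, I would observe that the parasol graph $P^\omega_\omega$ contains, for every finite level $l$, an isometric copy of $P^\omega_l$ as a sub-metric space (this is essentially built into the fractal recursive construction: taking the appropriate initial truncation of the replacement procedure yields $P^\omega_l$ sitting isometrically inside $P^\omega_\omega$, up to a global rescaling of the metric by a constant factor). Since distortion is monotone under passage to submetric spaces, one has $c_Y(P^\omega_\omega) \geq c_Y(P^\omega_l)$ for every $l$, and letting $l \to \infty$ and using $l^{1/p} \to \infty$ forces $c_Y(P^\omega_\omega) = \infty$.

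I do not expect any real obstacle: the work has been done in the preceding Theorems \ref{parasoldist} and \ref{equivalences}, and the corollary is a packaging of them. The only mild care needed is to verify that rescaled isometric copies of $P^\omega_l$ sit inside $P^\omega_\omega$, which follows from the self-similar fractal definition used in Section 2.2, mirroring the self-improvement step used in the proof of Theorem \ref{parasoldist}.
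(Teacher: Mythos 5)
Your proposal is correct and follows exactly the derivation the paper intends (the corollary is stated without an explicit proof precisely because it is this chain): Theorem \ref{equivalences} gives $Y\in\cC_{(\beta)}$, the renorming result of \cite{DKLR2014} upgrades this to $Y\in\cC_{(\beta_p)}$ for some $p\in(1,\infty)$, and Theorem \ref{parasoldist} gives the quantitative lower bound, with the ``in particular'' clause following from the rescaled isometric copies of $P^\omega_l$ inside $P^\omega_\omega$ and the monotonicity of distortion under subspaces. No gaps.
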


\subsection{Finite determinacy of bi-Lipschitz embeddability problems}

Let $\lambda\in[1,\infty)$. We say that a metric space $X$ is $\lambda$-finitely representable into another metric space $Y$ if for every \textit{finite} subset $F$ of $X$ one has $c_{Y}(F)\le \lambda$. We simply say that $X$ is crudely finitely representable if it is $\lambda$-finitely representable for some $\lambda\in[1,\infty)$.

\medskip

Let $\cC$ be a class of metric spaces. Given a metric space $X$, we say that its bi-Lipschitz embeddability problem for the class $\cC$ is finitely determined if for \textit{every} $Y\in \cC$, $X$ admits a bi-Lipschitz embedding into $Y$ whenever $X$ is crudely finitely representable in $Y$. Ostrovskii's finite determinacy theorem \cite{Ostrovskii2012} says that for every locally finite metric space $X$, its bi-Lipschitz embeddability problem for the class of Banach spaces is finitely determined. It is folklore that the locally finiteness condition in Ostrovskii's theorem can not be removed. For instance, $\ell_2$ is finitely representable into $\ell_1$ but it is now well known that $\ell_2$ does not bi-Lipschitz embed into $\ell_1$. If we restrict our attention to the class of graph metrics it becomes a non-trivial task to find examples of non locally finite graphs whose bi-Lipschitz embeddability problem for the class of Banach spaces is \textit{not} finitely determined. Appealing to Theorem \ref{treedist} (or Theorem \ref{BKL}) we can provide such an example. Indeed, if $Y=\xbl$, then $T_\omega^\omega$ is finitely representable in $Y$ but it does not admit any bi-Lipschitz embedding into $Y$. Therefore Ostrovskii's finite determinacy theorem does not hold even for structurally simple graphs such as (non locally finite) trees. Ostrovskii's proof actually gives a more precise quantitative statement.

\begin{theorem}[Ostrovskii] There exists $\gamma\in(0,\infty)$ such that for every locally finite metric space $M$ and every Banach space $Y$, the inequality $c_Y(M)\le \gamma\lambda$ holds whenever $M$ is $\lambda$-finitely representable in $Y$.
\end{theorem}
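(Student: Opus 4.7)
The plan is to combine an ultraproduct extraction with a Baudier-style scale-by-scale gluing so as to manufacture a single embedding from the given family of finite bi-Lipschitz embeddings. Fix a basepoint $x_0\in M$; since $M$ is locally finite the balls $B_n:=B(x_0,2^n)$ are finite and $M=\bigcup_n B_n$. By $\lambda$-finite representability, for each $n$ there is an embedding $f_n\colon B_n\to Y$ of distortion at most $\lambda$; after translation and rescaling I normalize so that $f_n(x_0)=0$, $\lip(f_n)\le 1$ and $\lip(f_n^{-1})\le\lambda$. This is the local data from which a global embedding will be built.

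The first move is to extract a coherent limit. Fix a non-principal ultrafilter $\cU$ on $\bn$ and extend each $f_n$ by zero to all of $M$. For each $x\in M$ the sequence $(f_n(x))_{n\ge n(x)}$ is bounded in norm by $d(x_0,x)$, so the map $\tilde f\colon M\to Y^{\cU}$ defined by $\tilde f(x):=[f_n(x)]_{\cU}$ is well-defined, and a direct verification along the ultrafilter shows that $\tilde f$ is bi-Lipschitz with distortion at most $\lambda$. Hence $M$ embeds into $Y^{\cU}$ with distortion $\lambda$.

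The decisive step is to replace $Y^{\cU}$ by $Y$ itself at the cost of a universal constant. Here one applies a Baudier-type embedding theorem: any locally finite metric subset $N$ of a Banach space $Z$ which is finitely representable in $Y$ admits a bi-Lipschitz embedding into $Y$ with absolute distortion constant $c$. The construction partitions $N$ into exponentially-growing annuli around a basepoint, embeds each annulus together with one anchor vertex into $Y$ via the $(1+\vare)$-finite representability of $Z$ in $Y$, and glues the pieces through a telescoping sum weighted by a geometrically decaying sequence of scale factors; the absoluteness of $c$ comes from summing the resulting geometric series. Composing with $\tilde f$ yields the desired embedding $M\hookrightarrow Y$ with distortion at most $c\lambda$, and one takes $\gamma:=c$.

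The main obstacle lies in the gluing step of the Baudier construction. Consecutive local embeddings of neighboring annuli are determined only up to affine transformations of $Y$, so compatibility between them must be enforced using translations furnished by the finite representability hypothesis. The geometric heart of the argument is the estimate that the positional error introduced at scale $n$ is of order $\lambda\cdot 2^n$; weighting the $n$-th annular contribution by a factor like $2^{-\vare n}$ for a well-chosen $\vare>0$ absorbs these errors into a bounded multiplicative loss that is independent of $M$, $Y$ and $\lambda$, which is exactly what produces the universal constant $\gamma$.
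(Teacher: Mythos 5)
The paper does not actually prove this statement: it is quoted verbatim (with attribution) from \cite{Ostrovskii2012}, so there is no internal proof to compare against. Judged on its own, your two-step scheme --- first pass to an ultrapower limit $\tilde f\colon M\to Y^{\cU}$ of the normalized finite embeddings $f_n\colon B(x_0,2^n)\to Y$, then use that $Y^{\cU}$ is $(1+\vare)$-finitely representable in $Y$ and invoke the locally-finite embedding theorem of \cite{BaudierLancien2008} (in its quantitative form, distortion at most a universal $c$ times the representability constant) --- is sound and is essentially the route taken in the literature: Ostrovskii's argument likewise reduces the problem to embedding a locally finite subset of a space finitely representable in $Y$ back into $Y$. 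The ultrapower step is fine as you state it (for each $x$ the tail $(f_n(x))_{n\ge n(x)}$ is bounded, and the two Lipschitz inequalities pass to the limit along $\cU$; injectivity and local finiteness of $\tilde f(M)$ follow). Two points deserve tightening. First, $\lambda$-finite representability only gives embeddings of distortion $\lambda+\vare_n$; this is harmless but should be said. Second, your description of the gluing inside the Baudier--Lancien step is not quite the actual mechanism: the published construction does not damp the $n$-th annular contribution by a geometric factor $2^{-\vare n}$, but rather interpolates convexly between the embeddings of two consecutive balls on each annulus and recovers the lower Lipschitz bound for points in distant annuli from the radial coordinate $d(x,x_0)$; the ``positional error of order $\lambda\cdot 2^n$'' is absorbed because it is compared against distances that are themselves of order $2^n$. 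Since you are using that theorem as a black box the inaccuracy does not invalidate the argument, but as written that paragraph would not survive if you tried to make the gluing explicit.
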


The bi-Lipschitz embeddability problem for the space $T_h^\omega$ seems to be more elusive. In the next proposition we show that an analogue of the quantitative statement above does not hold for the sequence $(T^\omega_h)_{h\ge 1}$. 

\begin{proposition}\label{uniformfindet}Let $p\in(1,2)$. There does not exists a constant $\gamma\in(0,\infty)$ such that for every $h\ge 1$, the inequality $c_{\ell_p}(T^\omega_h)\le \gamma\lambda$ holds whenever $T^\omega_h$ is $\lambda$-finitely representable in $\ell_p$.
\end{proposition}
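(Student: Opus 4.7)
The plan is to exploit the discrepancy, for $p\in(1,2)$, between the full embeddability of $T^\omega_h$ into $\ell_p$ (for which Theorem~\ref{treedist} gives the lower bound $(\log h)^{1/p}$) and the embeddability of its finite subsets (which we expect to control with exponent $1/2$ in place of $1/p$). Combining Theorem~\ref{treedist} and Proposition~\ref{lpdistortiontree} already yields $c_{\ell_p}(T^\omega_h)=\Theta\bigl((\log h)^{1/p}\bigr)$. Our target is therefore to produce, for each $h\ge 1$, a constant $\lambda_h=O\bigl((\log h)^{1/2}\bigr)$ such that $T^\omega_h$ is $\lambda_h$-finitely representable in $\ell_p$. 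Since $1/p>1/2$, the ratio $c_{\ell_p}(T^\omega_h)/\lambda_h=\Omega\bigl((\log h)^{1/p-1/2}\bigr)$ then tends to $\infty$ with $h$, ruling out any uniform constant $\gamma$.

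To manufacture $\lambda_h$, we fix an arbitrary finite subset $F\subseteq T^\omega_h$ and pass to the minimal subtree $T_F\subseteq T^\omega_h$ spanning $F$ (the Steiner hull of $F$), equipped with the edge-weights inherited from $T^\omega_h$. Because tree-metrics are hereditary (the four-point condition passes to subsets), $F$ sits isometrically inside $T_F$, so $c_{\ell_p}(F)\le c_{\ell_p}(T_F)$. The key observation is that the trivial caterpillar decomposition of $T_F$ (each edge being its own monotone path) has width at most the unweighted height of $T_F$, which is at most $h$; consequently $\cdim(T_F)\le h$, uniformly in $|F|$.

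We then apply Matou\v{s}ek's Theorem~\ref{cdim} to the weighted tree $T_F$, which provides
$$c_{\ell_p}(T_F)=O\bigl((\log\cdim(T_F))^{\min\{1/2,1/p\}}\bigr)=O\bigl((\log h)^{1/2}\bigr),$$
since $\min\{1/2,1/p\}=1/2$ for $p\in(1,2)$. Restricting such an embedding to $F$ yields the sought-for uniform bound $c_{\ell_p}(F)=O\bigl((\log h)^{1/2}\bigr)$, independent of $|F|$, so we may take $\lambda_h=O\bigl((\log h)^{1/2}\bigr)$ and conclude.

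The main subtlety to be wary of is verifying that the implicit constants in Matou\v{s}ek's upper bound really depend only on $p$ and on $\cdim(T_F)$, with no hidden dependence on $|F|$ or on the number of leaves of $T_F$ — a point that is in effect built into Theorem~\ref{cdim} but worth double-checking against the explicit formula $f(x)=\sum_{k=1}^{m(x)}\beta_k^x u_{P_k^x}$ recalled above. Uniformity in $|F|$ is what makes the strategy work; granted that, the anticipated inequality $c_{\ell_p}(T^\omega_h)\le\gamma\lambda_h$ fails for all large enough $h$, which is exactly the conclusion of Proposition~\ref{uniformfindet}.
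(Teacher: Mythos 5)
Your proposal is correct and runs on the same engine as the paper's proof: pit the lower bound $c_{\ell_p}(T^\omega_h)\gtrsim(\log h)^{1/p}$ from Theorem~\ref{treedist} against a uniform $O\bigl((\log h)^{1/2}\bigr)$ bound for finite subsets, and let the gap $(\log h)^{1/p-1/2}\to\infty$ kill any uniform constant. The only real difference is how the finite-subset bound is obtained. The paper does it directly: apply Proposition~\ref{lpdistortiontree} with $p=2$ to embed all of $T^\omega_h$ into $\ell_2$ with distortion $O(\sqrt{\log h})$, restrict to $F$, and then use the classical fact that a finite subset of a Hilbert space is isometric to a subset of $\ell_p$. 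You instead pass to the Steiner hull $T_F$ and invoke a caterpillar-dimension bound. That route works, but be careful with the citation: Theorem~\ref{cdim} as stated gives distortion $O\bigl(\log(\cdim T)^{1/p}\bigr)$, and for $p\in(1,2)$ the exponent $1/p$ only reproduces the lower bound and yields no contradiction. The exponent $\min\{1/2,1/p\}$ you need is the refinement for \emph{finite} trees discussed in the paper's prose, and its proof is exactly the Hilbert-to-$\ell_p$ transfer step that the paper makes explicit; so your argument is sound once you cite that version rather than Theorem~\ref{cdim} itself, but it is not a shortcut -- unpacked, it contains the paper's argument as a sub-step. (The detour through $T_F$ and the four-point condition is also unnecessary: restricting an $\ell_2$-embedding of the whole tree to $F$ already does the job.)
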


\begin{proof} Fix $p\in(1,2)$ and assume that there exist a finite constant $\beta>0$ such that for every $h\ge 1$, the inequality $c_{\ell_p}(T^\omega_h)\le \beta\alpha$ holds whenever for every finite subset $F\subset T^\omega_h$ one has $c_{\ell_p}(F)\le \alpha$. Let $F$ be a finite subset of $T^\omega_h$. It follows from Proposition \ref{lpdistortiontree} that $c_2(F)\lesssim \sqrt{\log(h)}$. It is important to notice that the constant depends only on the Hilbert space but not on $h$. Since it is well known that every finite subset of a Hilbert space is isometric to a subset of $\ell_p$ one gets that $c_{\ell_p}(F)\lesssim \sqrt{\log(h)}$. But Theorem \ref{treedist} says that $c_{\ell_p}(T^\omega_h)\gtrsim \log(h)^{\frac{1}{p}}$ where the constant depends solely on $p$. Therefore $\beta\gtrsim\log(h)^{\frac{1}{p}-\frac{1}{2}}$ which is a contradiction for $p\in(1,2)$ and $h$ large enough.
\end{proof}

\bigskip

\textbf{Acknowledgments:} We would like to extend our gratitude and appreciation to Florence Lancien, Gilles Lancien, and Tony Proch\'azka for the flawless organization of the Autumn School on Nonlinear Geometry of Banach Spaces and Applications in M\'etabief and of the Conference on Geometric Functional Analysis and its Applications in Besan\c con. The scientific activity and atmosphere was incredibly enlightening. The work presented here found its inspiration and was carried out while participating at these events.   

\begin{bibsection}
\begin{biblist}
\bib{Baudier2012}{article}{
  author={Baudier, F.},
  title={Embeddings of proper metric spaces into Banach spaces},
  journal={Hous. J. Math.},
  volume={38},
  date={2012},
  pages={209\ndash 223},
}

\bib{BKL2010}{article}{
  author={Baudier, F.},
  author={Kalton, N. J.},
  author={Lancien, G.},
  title={A new metric invariant for Banach spaces},
  journal={Studia Math.},
  volume={199},
  date={2010},
  pages={73-94},
}

\bib{BaudierLancien2008}{article}{
  author={Baudier, F.},
  author={Lancien, G.},
  title={Embeddings of locally finite metric spaces into Banach spaces},
  journal={Proc. Amer. Math. Soc.},
  volume={136},
  date={2008},
  pages={1029\ndash 1033},
}

\bib{Bourgain1986a}{article}{
  author={Bourgain, J.},
  title={The metrical interpretation of superreflexivity in Banach spaces},
  journal={Israel J. Math.},
  volume={56},
  date={1986},
  pages={222--230},
}

\bib{DKLRpriv}{article}{
  author={Dilworth, S. J.},
  author={Kutzarova, D.},
  author={Lancien, G.},
  author={Randrianarivony, N. L.},
  title={private communication},
}

\bib{DKLR2014}{article}{
  author={Dilworth, S. J.},
  author={Kutzarova, D.},
  author={Lancien, G.},
  author={Randrianarivony, N. L.},
  title={Asymptotic geometry of {B}anach spaces and uniform quotient maps},
  journal={Proc. Amer. Math. Soc.},
  fjournal={Proceedings of the American Mathematical Society},
  volume={142},
  year={2014},
  number={8},
  pages={2747--2762},
  issn={0002-9939},
  mrclass={46B80},
  mrnumber={3209329},
  url={http://dx.doi.org/10.1090/S0002-9939-2014-12001-6},
}

\bib{DKR2014}{article}{
  author={Dilworth, S. J.},
  author={Kutzarova, D.},
  author={Randrianarivony, N. L.},
  title={The transfer of property ($\beta $) of {R}olewicz by a uniform quotient map},
  journal={arXiv:1408.6424},
  fjournal={},
  volume={},
  year={2014},
  number={},
  pages={},
  issn={},
  mrclass={},
  mrnumber={},
  url={},
}

\bib{Gupta2000}{article}{
  author={Gupta, A.},
  title={Embedding tree metrics into low-dimensional {E}uclidean spaces},
  journal={Discrete Comput. Geom.},
  fjournal={Discrete \& Computational Geometry. An International Journal of Mathematics and Computer Science},
  volume={24},
  year={2000},
  number={1},
  pages={105--116},
  issn={0179-5376},
  coden={DCGEER},
  mrclass={68U05 (54C25 54E35)},
  mrnumber={1765236 (2001b:68144)},
  url={http://dx.doi.org/10.1145/301250.301434},
}

\bib{Handbook}{collection}{
  title={Handbook of the geometry of Banach spaces. Vol. I},
  editor={Johnson, W. B.},
  editor={Lindenstrauss, J.},
  publisher={North-Holland Publishing Co.},
  place={Amsterdam},
  date={2001},
}

\bib{JohnsonSchechtman2009}{article}{
  author={Johnson, W. B.},
  author={Schechtman, G.},
  title={Diamond graphs and super-reflexivity},
  journal={J. Topol. Anal.},
  fjournal={Journal of Topology and Analysis},
  volume={1},
  year={2009},
  number={2},
  pages={177--189},
  issn={1793-5253},
  mrclass={52C99 (46B10)},
  mrnumber={2541760 (2010k:52031)},
  url={http://dx.doi.org/10.1142/S1793525309000114},
}

\bib{Kloeckner2014}{article}{
  author={Kloeckner, B. R.},
  title={Yet another short proof of {B}ourgain's distortion estimate for embedding of trees into uniformly convex {B}anach spaces},
  journal={Israel J. Math.},
  fjournal={Israel Journal of Mathematics},
  volume={200},
  year={2014},
  number={1},
  pages={419--422},
  issn={0021-2172},
  mrclass={46B25},
  mrnumber={3219585},
  url={http://dx.doi.org/10.1007/s11856-014-0024-4},
}

\bib{Kutzarova1991}{article}{
  author={Kutzarova, D.},
  title={{$k$}-{$\beta $} and {$k$}-nearly uniformly convex {B}anach spaces},
  journal={J. Math. Anal. Appl.},
  fjournal={Journal of Mathematical Analysis and Applications},
  volume={162},
  year={1991},
  number={2},
  pages={322--338},
  issn={0022-247X},
  coden={JMANAK},
  mrclass={46B04 (46B03 46B20)},
  mrnumber={1137623 (93b:46018)},
  mrreviewer={Yves Raynaud},
  url={http://dx.doi.org/10.1016/0022-247X(91)90153-Q},
}

\bib{Kutzarova1990}{article}{
  author={Kutzarova, D.},
  title={An isomorphic characterization of property {$(\beta )$} of {R}olewicz},
  journal={Note Mat.},
  fjournal={Note di Matematica},
  volume={10},
  year={1990},
  number={2},
  pages={347--354},
  issn={1123-2536},
  mrclass={46B20},
  mrnumber={1204212 (94a:46020)},
  mrreviewer={S. Rolewicz},
}

\bib{LinialMagenSaks1998}{article}{
  author={Linial, N.},
  author={Magen, A.},
  author={Saks, M.},
  title={Low distortion {E}uclidean embeddings of trees},
  journal={Israel J. Math.},
  fjournal={Israel Journal of Mathematics},
  volume={106},
  year={1998},
  pages={339--348},
  issn={0021-2172},
  url={http://dx.doi.org/10.1007/BF02773475},
}

\bib{Matousek1999}{article}{
  author={Matou{\v {s}}ek, J.},
  title={On embedding trees into uniformly convex Banach spaces},
  journal={Israel J. Math.},
  volume={114},
  year={1999},
  pages={221--237},
}

\bib{Milman1971}{article}{
  author={Milman, V. D.},
  title={Geometric theory of Banach spaces. II. Geometry of the unit ball},
  language={Russian},
  journal={Uspehi Mat. Nauk},
  volume={26},
  date={1971},
  pages={73\ndash 149},
  note={English translation: Russian Math. Surveys {\bf 26} (1971), 79--163},
}

\bib{Ostrovskii2012}{article}{
  author={Ostrovskii, M. I.},
  title={Embeddability of locally finite metric spaces into {B}anach spaces is finitely determined},
  journal={Proc. Amer. Math. Soc.},
  volume={140},
  year={2012},
  number={8},
  pages={2721--2730},
}
\end{biblist}
\end{bibsection}
\end{document}